\def\version{Version 5 -- last updated 19/11/2024
\hfill\href{https://arxiv.org/abs/2309.05921}{arXiv:2309.05921}
}

\documentclass[11pt,a4paper]{amsart}

\usepackage{fullpage}

\usepackage[hypertexnames=false]{hyperref}

\usepackage{amssymb,amscd,amsxtra}
\usepackage{autobreak}

\usepackage[alphabetic,lite]{amsrefs}

\usepackage[autobold]{mathfixs}

\usepackage[scr,scaled=1.1]{rsfso}

\usepackage{leftidx}

\usepackage[clockwise]{rotating}

\usepackage{xkvltxp}
\usepackage[nomargin,inline]{fixme}
\fxusetheme{color}
\definecolor{fxnote}{rgb}{0.0000,0.6000,0.0000}
\usepackage[]{todonotes}

\usepackage{color-edits}
\addauthor[AB]{AB}{magenta}

\usepackage{mathtools}
\usepackage{relsize}

\usepackage{tikz}
\usetikzlibrary{arrows,calc}

\usepackage{tikz-cd}
\usepackage{tikz-network}


\usepackage{braket}

\usepackage{pigpen}

\newtheorem{thm}{Theorem}[section]

\newtheorem{lem}[thm]{Lemma}
\newtheorem{prop}[thm]{Proposition}
\newtheorem{cor}[thm]{Corollary}

\theoremstyle{definition}

\newtheorem*{rem*}{Remark}

\newtheorem{examp}[thm]{Example}

\numberwithin{equation}{section}
\numberwithin{figure}{section}

\hyphenation{hom-o-l-ogy co-hom-o-l-ogy Hoch-sch-ild}

\def\ds{\displaystyle}
\def\:{\colon}
\def\.{\cdot}

\def\<{\left\langle}
\def\>{\right\rangle}
\def\({\left(}
\def\){\right)}
\def\ph#1{\phantom{#1}}
\def\epsilon{\varepsilon}
\def\phi{\varphi}

\def\leq{\leqslant}
\def\geq{\geqslant}

\def\bar#1{\overline{#1}}

\def\tilde#1{\widetilde{#1}}
\def\iso{\cong}

\def\F{\mathbb{F}}

\def\k{\Bbbk}

\def\Z{\mathbb{Z}}

\DeclareMathOperator{\Coext}{Coext}
\DeclareMathOperator{\Cohom}{Cohom}

\DeclareMathOperator{\cone}{C}

\DeclareMathOperator{\End}{End}
\DeclareMathOperator{\Ext}{Ext}
\DeclareMathOperator{\Gal}{Gal}

\DeclareMathOperator{\Hom}{Hom}

\DeclareMathOperator{\Pic}{Pic}

\def\SO{\mathrm{SO}}

\def\tmf{{\mathrm{tmf}}}

\DeclareMathOperator{\Sq}{Sq}

\def\dlQ{\mathrm{Q}}

\def\QS0{\dlQ S^0}
\def\QSo0{\dlQ_0S^0}
\def\StA{\mathcal{A}}

\def\StE{\mathcal{E}}

\DeclareMathOperator{\rad}{rad}

\def\op{\mathrm{o}}


\title
[Endotrivial modules and iterated Jokers
in chromatic homotopy theory]
{Endotrivial modules for the quaternion
group and iterated Jokers in chromatic
homotopy theory}
\author{Andrew Baker}
\date{\version}
\address{
School of Mathematics \& Statistics,
University of Glasgow, Glasgow G12~8QQ, Scotland.}
\email{andrew.j.baker@glasgow.ac.uk}
\urladdr{http://www.maths.gla.ac.uk/$\sim$ajb}
\thanks{I would like to thank the following
for helpful comments: Dave Benson, Ken Brown,
Bob Bruner, Hans-Werner Henn, Lennart Meier,
Doug Ravenel, John Rognes, Danny Shi, and
Vesna Stojanoska. I would like to acknowledge
the support of LAGA, l'Universit\'e Sorbonne,
Paris Nord where an early version of this 
paper was completed.}
\keywords{Stable homotopy theory, Steenrod
algebra, Lubin-Tate spectrum, Morava $K$-theory,
endotrivial module}
\subjclass[2020]{Primary 55S25; Secondary 55N34, 20C20}

\begin{document}

\begin{abstract}
The algebraic Joker module was originally
described in the 1970s by Adams and Priddy
and is a $5$-dimensional module over the
subHopf algebra $\StA(1)$ of the mod~$2$
Steenrod algebra. It is a self-dual
\emph{endotrivial module}, i.e., an
invertible object in the stable module
category of~$\StA(1)$. Recently it has
been shown that no analogues exist for
$\StA(n)$ with $n\geq2$. In previous work 
the author used doubling to produce an 
`iterated double Joker' which is an 
$\StA(n)$-module but not stably invertible. 
We also showed that for $n=1,2,3$ these 
iterated doubles were realisable as 
cohomology of CW spectra, but no such 
realisation existed for~$n>3$.

The main point of this paper is to show
that in the height~$2$ chromatic context,
the Morava $K$-theory of double Jokers
realises an exceptional endotrivial module
over the quaternion group of order~$8$
that only exists over a field of
characteristic~$2$ containing a primitive
cube root of unity. This has connections
with certain Massey products in the
cohomology of the quaternion group.
\end{abstract}

\maketitle

\section*{Introduction}

Following Adams \& Priddy~\cite{JFA&SBP},
in \cites{AB:Jokers,AB&TB:Jokers} we
considered the Joker $\StA(1)$-module
and its iterated doubles over the finite
subHopf algebras $\StA(n)\subseteq\StA$,
and showed that for small values of~$n$,
there were spectra and spaces realising
these. From an algebraic point of view,
the original~$\StA(1)$ Joker module was
important because it gave a non-trivial
self inverse stably invertible module, 
i.e., an element of order~$2$ in the 
Picard group of the stable module 
category of~$\StA(1)$. More recently,
Bhattacharya \& Ricka~\cite{PB&NR:PicA2}
and Pan \& Yan have shown that no such 
exotic elements can exist for~$\StA(n)$ 
when $n\geq2$ making use of ideas found 
in the related study of endotrivial 
modules for group algebras, conveniently 
described in the recent book of 
Mazza~\cite{NM:EndoTrivBook}.

The main aim of this paper is to show
that at least some of our geometric
Joker spectra have Lubin-Tate cohomology
which realises a certain lifting of
a $5$-dimensional endotrivial module
over the quaternion group~$Q_8$ and
the field~$\F_4$. Here~$Q_8$ is
realised as a subgroup of the second
Morava stabilizer group chromatic.
This example suggests that in the
chromatic setting there may be other
interesting endotrivial modules
associated with finite subgroups of
Morava stabilizer groups; Lennart
Meier has pointed out that this fits
well with results in
\cite{DC-AM-NN-JN:Descent}*{appendix~B}.

We collect some useful algebraic ideas 
and results on skew group rings and 
skew Hecke algebras in the Appendix.

\noindent
\textbf{Conventions and notation:}
We will work at the prime $p=2$ and
chromatic height~$2$ when considering
stable homotopy theory.


\section{Homotopy fixed points for
finite subgroups of Morava stabilizer
groups}\label{sec:HtpyFixPts}

We briefly recall the general set-up
for homotopy fixed point spectra of
Lubin-Tate spectra, where the group
involved is finite, although work of
Devinatz \& Hopkins~\cite{ESD-MJH:HtpyFixPtSpectraClSubgpsMoravaStabGp}
allows for more general subgroups of
Morava stabilizer groups to be used.
We will adopt the notation of
Henn~\cite{H-WH:CentResn}; in particular,
$\mathbb{G}_n$ is the \emph{extended
Morava stabilizer group}
\[
\mathbb{G}_n =
\mathbb{D}_n^\times/\langle S^n\rangle
\iso
\Gal(\F_{p^n}/\F_p)\ltimes\mathcal{O}_n^\times,
\]
where $S\in\mathbb{D}_n$ is the uniformizer
satisfying $S^n=p$.
\begin{examp}\label{examp:CentralC_2}
For any prime $p$ and $n\geq1$, there is
a unique central subgroup of order~$2$,
namely $C_2=\{\pm1\}\lhd\mathbb{G}_n$.
When $n=1$ and $p=2$, it is well known
that $E_1^{C_2}\sim K\mathrm{O}_2$.

For $p$ odd, there is a unique central
cyclic subgroup $C_{p-1}\lhd\mathbb{G}_n$
of order $p-1$, and when $n=1$ $E_1^{hC_{p-1}}$
is the Adams summand of $K\mathrm{U}_p$.
\end{examp}

\begin{examp}\label{examp:Q8}
When $p=2=n$, $\mathrm{O}_2^\times$ contains
a subgroup~$G_{24}$ of order~$24$ whose
unique $2$-Sylow subgroup is isomorphic
to the quaternion group~$Q_8$; this is 
the \emph{binary tetrahedral group} and 
double covers $A_4\leq\SO(3)$, the group 
of rotational symmetries of a regular 
tetrahedron. This group is the semidirect 
product $C_3\ltimes Q_8$ and there is also
a split extension
\[
G_{48}=\Gal(\F_{4}/\F_2)\ltimes G_{24}
\leq \mathbb{G}_2
\]
of order~$48$ in the extended Morava
stabilizer group. The fixed point spectrum
$E_2^{hG_{48}}$ is an avatar of the
spectrum of topological modular forms;
see the article by Hopkins \& Mahowald
in \cite{TMFbook}*{part~III}. A subgroup
$H\leq G_{48}$ gives rise to extensions
$E_2^{hG_{48}}\to E_2^{hH}\to E_2$ where
the latter is a faithful $H$-Galois
extension in the sense of Rognes~\cite{JR:MAMS192};
this depends on work of Devinatz \&
Hopkins~\cite{ESD-MJH:HtpyFixPtSpectraClSubgpsMoravaStabGp}.
\end{examp}

\section{A finite group of operations in Lubin-Tate
theory of height $2$}\label{sec:L-TOps}

Our work requires an explicit realisation of
$Q_8$ as a subgroup of the height~$2$ Morava
stabilizer group. We follow the account and
notation of Henn~\cite{H-WH:CentResn}*{section~2},
especially lemma~2.1.

The ring of \emph{Hurwitz quaternion}s $\mathcal{H}$
is the subdomain of $\mathbb{H}$ additively
generated by the elements
\[
\dfrac{(\pm1\pm i\pm j\pm k)}{2}.
\]
It has a unique completely prime maximal
ideal $\mathcal{M}$ which contains~$2$
as well as~$i+1,j+1,k+1$. The quotient
ring is a field with~$4$ elements,
\[
\F_4=\mathcal{H}/\mathcal{M}=\F_2(\omega),
\]
where $\omega$ denotes (the residue class of)
the primitive cube root of unity
\[
\omega = -\frac{(1+i+j+k)}{2}.
\]
Routine calculations show that
\[
i\omega i^{-1} = \omega + j+k \equiv \omega\bmod{\mathcal{M}}
\]
and also
\[
j\omega j^{-1} \equiv \omega \equiv k\omega k^{-1}\bmod{\mathcal{M}},
\]
therefore the quaternion subgroup
$Q_8=\langle i,j\rangle\leq\mathcal{H}^\times$
acts trivially of $\F_4$ and we may form the
(trivially twisted) group ring
$\F_4\langle Q_4\rangle=\F_4[Q_4]$.

We can complete $\mathcal{H}$ with respect
to $\mathcal{M}$ or equivalently~$2$, to
obtain a model for the maximal order
$\mathcal{O}_2$ in the division algebra
$\mathbb{D}_2=\mathcal{H}_{\mathcal{M}}$.
In fact
\[
\mathbb{D}_2 = \Z_4\langle S\rangle/(S^2-2),
\]
where $\Z_4=\mathrm{W}(\F_4)=\Z_2(\omega)$
is the ring of Witt vectors for $\F_4$ and
the uniformizer $S$ intertwines with $\Z_4$
so that $S(-)S^{-1}$ is the lift of Frobenius
(and so $S^2$ acts trivially). The quotient
group
\[
\mathbb{G}_2 =
\mathbb{D}_2^\times/\langle S^2\rangle
\iso
\Gal(\F_4/\F_2)\ltimes\mathcal{O}_2^\times
\]
is the \emph{extended Morava stabilizer group}.

Here is an explicit description for elements
of $Q_8$ in terms of Teichm\"uller expansions
as in~\cite{H-WH:CentResn}*{lemma~2.1}:
\begin{equation}\label{eq:Q8explicit}
i = \frac{1}{3}(1+2\omega^2)(1-aS),
\quad
j = \frac{1}{3}(1+2\omega^2)(1-a\omega^2S),
\quad
k = \frac{1}{3}(1+2\omega^2)(1-a\omega S),
\end{equation}
where we choose $\sqrt{-7}\in\Z_2$ to be
the square root of $-7$ satisfying
$\sqrt{-7}\equiv 5\bmod{8}$ and set
\[
a = \frac{1-2\omega}{\sqrt{-7}}\in\Z_4.
\]
%
Notice that working modulo $S^3=2S$ in
$\mathcal{O}_2$,
\begin{equation}\label{eq:Q8explicit-mod}
i \equiv 1+S+2\omega,
\quad
j \equiv 1+\omega^2S+2\omega,
\quad
k \equiv 1+\omega S+2\omega.
\end{equation}

Of course there is a twisted group ring
$(E_2)_0\langle Q_8\rangle$ which
has $\F_4[Q_4]$ as a quotient ring.

\section{Lubin-Tate theory for double Joker
spectra}\label{sec:L-TDoubleJokers}

Let $J=J(2)$ be one of the finite CW spectra
constructed in~\cite{AB:Jokers}. Its mod~$2$
cohomology is the cyclic $\StA(2)$-module
$H^*(J)\iso\StA(2)/\StA(2)\{\mathrm{Q}^0,\mathrm{Q}^1,\mathrm{Q}^2,\Sq^6\}$
(here the $\mathrm{Q}^i$ are the Milnor
primitives), and there are two possible
extensions to an $\StA$-module with trivial
or non-trivial $\Sq^8$-action giving dual
$\StA$-modules.
\begin{center}
\begin{tikzpicture}[scale=0.8]
\Vertex[y=0,size=.05,color=black]{B2}
\Vertex[y=-1,size=.05,color=black]{B1}
\Vertex[y=-2,size=.05,color=black]{B0}
\Text[y=-2,position=right,distance=1mm]{\tiny$4$}
\Vertex[y=-3,size=.05,color=black]{B-1}
\Vertex[y=-4,size=.05,color=black]{B-2}
\Text[y=-4,position=right,distance=1mm]{\tiny$0$}
\Edge[lw=0.75pt,bend=-45,position=right](B0)(B2)
\Edge[lw=0.75pt,bend=30](B1)(B2)
\Edge[lw=0.75pt,bend=45,label={$\Sq^4$},position=left](B-1)(B1)
\Edge[lw=0.75pt,bend=30,label={$\Sq^2$},position=left](B-2)(B-1)
\Edge[lw=0.75pt,bend=-45](B-2)(B0)
\Edge[lw=0.75pt,bend=-60,style=dotted,label={$\Sq^8$},position=right](B-2)(B2)
\end{tikzpicture}
\end{center}
The attaching maps in such a CW spectrum 
are essentially suspensions of $\eta$ and
$\nu$. Up to homotopy equivalence there
are two such spectra which are Spanier-Whitehead
dual to each other and realise the two
$\StA$-module extensions.

There is a CW spectrum $dA(1)$ known as `the
double of $\StA(1)$' whose cohomology as an
$\StA(2)$-module is $H^*(dA(1))\iso\StA(2)/\!/\StE(2)$;
for a detailed discussion see
Bhattacharya et al~\cite{BEM:v2-periodicA1}.
In \cite{AB:Jokers}*{remark~5.1} we outlined
how to construct such a spectrum starting with
a double Joker and attaching cells. By construction,
$dA(1)$ contains $J$ as a subcomplex with cofibre
a suspension of the `upside-down double question
mark' complex $Q^\text{\textquestiondown}$ whose
cohomology is $3$-dimensional and has a non-trivial
action of $\Sq^2\Sq^4$.
\begin{center}
\begin{tikzpicture}[scale=0.8]
\Text[x=-3,y=1.5]{$H^*(Q^\text{\textquestiondown})$}
\Vertex[y=3,size=.05,color=black]{B6}
\Vertex[y=2,size=.05,color=black]{B4}
\Vertex[y=0,size=.05,color=black]{B0}
\Edge[lw=0.75pt,bend=30,label={$\Sq^2$},position=left](B4)(B6)
\Edge[lw=0.75pt,bend=45,label={$\Sq^4$},position=left](B0)(B4)
\end{tikzpicture}
\end{center}
This is stably Spanier-Whitehead dual
to the`double question mark' complex
$Q^\text{?}$ whose cohomology has a
non-trivial action of~$\Sq^4\Sq^2$.
\begin{center}
\begin{tikzpicture}[scale=0.8] 
\Text[x=-3,y=1.5]{$H^*(Q^\text{?})$}
\Vertex[y=3,size=.05,color=black]{B6}
\Vertex[y=1,size=.05,color=black]{B2}
\Vertex[y=0,size=.05,color=black]{B0}
\Edge[lw=0.75pt,bend=-30,label={$\Sq^4$},position=right](B2)(B6)
\Edge[lw=0.75pt,bend=-45,label={$\Sq^2$},position=right](B0)(B2)
\end{tikzpicture}
\end{center}

There is cofibre sequence
$J\to dA(1)\to \Sigma^6Q^\text{\textquestiondown}$
of the form shown.
\begin{center}
\begin{tikzpicture}[scale=0.8]
\Vertex[x=-4,y=4,size=.1,color=white]{J4}
\Vertex[x=-4,y=3,size=.1,color=white]{J32}
\Vertex[x=-4,y=2,size=.1,color=white]{J2}
\Vertex[x=-4,y=1,size=.1,color=white]{J1}
\Vertex[x=-4,y=0,size=.1,color=white]{J0}
\Edge[lw=0.75pt,bend=-30](J32)(J4)
\Edge[lw=0.75pt,bend=45](J2)(J4)
\Edge[lw=0.75pt,bend=-45](J1)(J32)
\Edge[lw=0.75pt,bend=-30](J0)(J1)
\Edge[lw=0.75pt,bend=45](J0)(J2)

\Vertex[x=-3.0,y=2,Pseudo]{P0}
\Vertex[x=-1.0,y=2,Pseudo]{P1}
\Edge[Direct](P0)(P1)

\Vertex[x=2,y=6,color=white,size=.05]{A6}
\Text[x=2,y=6,position=right,distance=1mm]{\tiny$12$}
\Vertex[x=2,y=5,color=white,size=.1]{A5}
\Vertex[x=2,y=4,size=.1,color=white]{A4}
\Vertex[y=3,size=.1,color=white]{A31}
\Vertex[x=2,y=3,size=.1,color=white]{A32}
\Vertex[y=2,size=.1,color=white]{A2}
\Vertex[y=1,size=.1,color=white]{A1}
\Vertex[y=0,size=.1,color=white]{A0}
\Text[y=0,position=right,distance=1mm]{\tiny$0$}
\Edge[lw=0.75pt,bend=30](A5)(A6)
\Edge[lw=0.75pt,bend=-45](A4)(A6)
\Edge[lw=0.75pt,bend=-30](A32)(A4)
\Edge[lw=0.75pt](A31)(A5)
\Edge[lw=0.75pt,bend=30](A2)(A31)
\Edge[lw=0.75pt](A2)(A4)
\Edge[lw=0.75pt](A1)(A32)
\Edge[lw=0.75pt,bend=-30](A0)(A1)
\Edge[lw=0.75pt,bend=45](A0)(A2)

\Vertex[x=3.0,y=4,Pseudo]{PQ0}
\Vertex[x=5.0,y=4,Pseudo]{PQ1}
\Edge[Direct](PQ0)(PQ1)

\Vertex[x=6,y=6,color=white,size=.05]{Q6}
\Vertex[x=6,y=5,color=white,size=.1]{Q5}
\Vertex[x=6,y=3,size=.1,color=white]{Q31}
\Text[x=6,y=3,position=right,distance=1mm]{\tiny$6$}
\Edge[lw=0.75pt,bend=-45](Q31)(Q5)
\Edge[lw=0.75pt,bend=-30](Q5)(Q6)
\end{tikzpicture}
\end{center}

We can apply a complex oriented homology
theory to this cofibre sequence, thus
obtaining a short exact sequence; in
particular we will apply $BP_*(-)$,
$(E_2)_*(-)$ or $(K_2)_*(-)$. Our goal
is to understand the Lubin-Tate cohomology
$E_2^*(J)$ as a left $E_2^*\langle Q_8\rangle$-module
where $Q_2\leq\mathbb{G}_2$ is a quaternion
subgroup. Since~$E_2^*(J)$ is a finitely
generated free module and~$J$ is dualizable,
we can instead work with right module
$(E_2)_*(J)$ in terms of the corresponding
$(E_2)_*(E_2)$-comodule structure. Actually
we prefer to work directly with the smaller
complex $Q^\text{\textquestiondown}$ and
use the fact $(E_2)_*(dA(1))$ and $(E_2)^*(dA(1))$
are free $E_2^*\langle Q_8\rangle$-modules
of rank~$1$: this is well-known and appears
in section~6 of the article by Hopkins 
\& Mahowald~\cite{TMFbook}*{part~III},
and a detailed discussion also occurs in
\cite{RRB&JR:tmfBook}*{section~1.4}. The
key point is to use the equivalence
$E_2^{hQ_8}\wedge dA(1)\sim E_2$ together 
with results of Devinatz \&
Hopkins~\cite{ESD-MJH:HtpyFixPtSpectraClSubgpsMoravaStabGp}.
So our main calculational result identifies
the right $E_2^*\langle Q_8\rangle$-module
$(E_2)_*(Q^\text{\textquestiondown})$; we
will do this by first describing the
$K_2^*[Q_8]$-module
$(K_2)_*(Q^\text{\textquestiondown})$.

Here is our main result.
\begin{thm}\label{thm:JokerInvtble}
The $E_2^*\langle Q_8\rangle$-module $E_2^*(J)$
is stably invertible and self dual, and its
reduction to~$K_2^*(J)$ is a $5$-dimensional
stably invertible $K_2^*[Q_8]$-module.
\end{thm}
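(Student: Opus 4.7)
The plan is to extract the module structure from the cofibre sequence $J \to dA(1) \to \Sigma^6 Q^{\text{\textquestiondown}}$ described above. Applying complex oriented Lubin--Tate homology, with cells concentrated in even degrees, yields a short exact sequence
\[
0 \to (E_2)_*(J) \to (E_2)_*(dA(1)) \to (E_2)_*(\Sigma^6 Q^{\text{\textquestiondown}}) \to 0
\]
of $E_2^*\langle Q_8\rangle$-modules, and analogously for $(K_2)_*$. The key input already cited in this section is that $(E_2)_*(dA(1))$ is free of rank one over $E_2^*\langle Q_8\rangle$, coming from $E_2^{hQ_8} \wedge dA(1) \sim E_2$ via the descent machinery of Devinatz--Hopkins; in particular the middle term is projective in the stable module category.

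Consequently, $(E_2)_*(J)$ is identified with the Heller translate of $(E_2)_*(\Sigma^6 Q^{\text{\textquestiondown}})$. Since Heller translation preserves the Picard group, the invertibility statement for $E_2^*(J)$ reduces to the same claim for the rank-$3$ module $(E_2)_*(Q^{\text{\textquestiondown}})$. By a standard Nakayama-type lifting along the nilpotent maximal ideal of $E_2^*$, this in turn reduces to verifying that $(K_2)_*(Q^{\text{\textquestiondown}})$ is stably invertible as a $K_2^*[Q_8]$-module, and the $5$-dimensionality of $K_2^*(J)$ is then automatic from the additivity of ranks in the short exact sequence.

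To carry out that verification, I would combine the explicit Teichm\"uller expressions \eqref{eq:Q8explicit-mod} for $i, j, k$ modulo $S^3 = 2S$ with the fact that the attaching maps in $Q^{\text{\textquestiondown}}$ are suspensions of $\eta$ and $\nu$. The residual action on a preferred $K_2^*$-basis of $(K_2)_*(Q^{\text{\textquestiondown}})$ can then be computed via the $Q_8$-action on the Lubin--Tate formal group law, producing three explicit $3\times 3$ matrices over $\F_4$; the resulting module is then verified to be stably invertible by a direct endomorphism-ring (or tensor-square) calculation, together with identifying its class in the Picard group of $K_2^*[Q_8]$. This matrix computation, confined to the $3$-dimensional complex $Q^{\text{\textquestiondown}}$ rather than attacking the $5$-dimensional $K_2^*(J)$ head-on, is the main obstacle.

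For self-duality, I would use the Spanier--Whitehead duality of the two possible $\StA$-module extensions of $H^*(J)$ noted earlier: since $(E_2)_*$ does not detect $\Sq^8$, both SW-dual spectra yield the same $E_2^*\langle Q_8\rangle$-module, whence $E_2^*(J) \cong E_2^*(DJ) \cong \Hom_{E_2^*}(E_2^*(J), E_2^*)$ with the standard contragredient $Q_8$-action. Combined with the invertibility just established, this forces $E_2^*(J)^{\otimes 2}$ to be trivial in the Picard group, giving the claimed self-duality.
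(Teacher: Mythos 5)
Your overall strategy tracks the paper's quite closely: pass to the cofibre sequence $J \to dA(1) \to \Sigma^6 Q^{\text{\textquestiondown}}$, use that $(E_2)_*(dA(1))$ is free of rank one over $E_2^*\langle Q_8\rangle$ so that $(E_2)_*(J)$ is a Heller translate of $(E_2)_*(\Sigma^6 Q^{\text{\textquestiondown}})$, read off explicit matrices for the $Q_8$-action on the $3$-cell complex from the Teichm\"uller expansions mod $S^3$, and lift the invertibility from the residue field to $E_2^*$. This is the paper's route. Two deviations are worth flagging.

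First, the phrase ``Nakayama-type lifting along the nilpotent maximal ideal of $E_2^*$'' is a misstatement: the maximal ideal $(p,u_1)$ of $E_2^0$ is topologically nilpotent but not nilpotent. What actually makes the lifting go through is that $E_2^0$ is a Noetherian complete local ring and the skew group ring $E_2^*\langle Q_8\rangle$ is local with the $\kappa = \F_4$-residue situation of the appendix; the paper's Lemma~\ref{lemLiftingEndoTriv} (and the surrounding results \ref{prop:RingThyResult}--\ref{cor:R<G>-local}) is exactly the correct statement, and you should cite it rather than gesture at nilpotence.

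Second, your self-duality argument has a genuine gap. You argue that ``$(E_2)_*$ does not detect $\Sq^8$,'' so that the two Spanier--Whitehead dual Joker spectra have isomorphic $E_2^*\langle Q_8\rangle$-modules. What the paper's calculation actually establishes is that the $Q_8$-action on $K_2^0(\cone(\sigma))$ is trivial, because the coaction element $v_2 t_1 + t_1^4$ maps to $u^4(\alpha_1 + \alpha_1^4) = 0$ in $(K_2)_*(E_2)$. At the $E_2$-level this cancellation does not occur: $\sigma$ is detected in $(E_2)_*$-homology, and the two Joker spectra have non-isomorphic $(E_2)_*(E_2)$-comodules. So the $K_2$-level triviality does not transfer directly to the $E_2$-level claim $E_2^*(J) \cong E_2^*(DJ)$. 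The more robust route, consistent with the paper, is to identify $K_2^0(J)$ with the known $5$-dimensional endotrivial module $W_5 = \Omega W_3$ of \cite{NM:EndoTrivBook}*{theorem~3.8(1)} (equivalently, the module $J'$ of Section~\ref{sec:Q8reps}, which is stably self-inverse), and then pull the stable self-duality up through the lifting lemma; alternatively, make the comparison of the two Joker spectra only at the level of the \emph{stable} module category over $E_2^*\langle Q_8\rangle$, where the $\sigma$-term sits in a projective summand. The rest of the proposal --- including your choice to verify invertibility by computing $\End$ directly rather than matching Mazza's list --- is a legitimate alternative to the paper's identification step, though it leaves more computation to do.
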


Of course we can reduce to studying $K_2^0(J)$
as a $K_2^0[Q_8]=\F_4[Q_8]$-module. The
$5$-dimensional stably invertible $\F_4[Q_8]$-module
$W_5$ is that of\/~\cite{NM:EndoTrivBook}*{theorem~3.8(1)}
and this is $\Omega W_3$ for a $3$-dimensional 
stably invertible $\F_4[Q_8]$-module~$W_3$ 
which we will show is isomorphic to 
$(K_2)_0(Q^\text{\textquestiondown})$. For 
a suitable choice of basis $w_1,w_2,w_3$, 
the action of $Q_8$ on~$W_3$ is given by
\begin{equation}\label{eq:W3-action}
\left\{
\begin{aligned}
\quad 
iw_1 &= w_1 + w_2, &\quad jw_1 &= w_1 + \omega w_2, 
\\
\quad 
iw_2 &= w_2 + w_3, &\quad jw_2 &= w_2 + \omega^2 w_2, 
\\
\quad 
iw_3 &= w_3, &\quad jw_1 &= w_3, \\
\end{aligned}
\right.
\end{equation}
with corresponding matrices
\[
i\: \begin{bmatrix} 1 & 0 & 0 \\ 1 & 1 & 0 \\ 0 & 1 & 1 \end{bmatrix},
\quad
j\: \begin{bmatrix} 1 & 0 & 0 \\ \omega & 1 & 0 \\ 0 & \omega^2 & 1 \end{bmatrix}.
\]
In fact a different choice of basis 
\begin{equation}\label{eq:W3-newbasis}
w'_1 = w_1+w_2,\quad w'_2=w_2+\omega^2w_3,\quad w'_3=w_3
\end{equation}
is more convenient for our purposes; the 
corresponding matrices are then
\begin{equation}\label{eq:W3-newmatrices}
i\: \begin{bmatrix} 1 & 0 & 0 \\ 1 & 1 & 0 \\ \omega & 1 & 1 \end{bmatrix},
\quad
j\: \begin{bmatrix} 1 & 0 & 0 \\ \omega & 1 & 0 \\ \omega & \omega^2 & 1 \end{bmatrix}.
\end{equation}

The lifting of results to the Lubin-Tate
setting uses the algebra discussed in
Appendix~\ref{app:CrossProd}. Of course 
we need to do some topological calculations
to obtain these results and these are 
outlined in the next section.


%
%

\section{Calculations}\label{sec:Calculations}
\subsection*{Homological algebra conventions}
Before describing the calculations required,
we explain our notational conventions for
homological algebra.

Given a flat Hopf algebroid $(A,\Gamma)$ which
might be graded and two left $\Gamma$-comodules
$L,M$, we denote by $\Cohom_\Gamma(L,M)$ the
set of comodule homomorphisms $L\to M$, and
$\Coext^{s,*}_\Gamma(L,-)$ for the $s$-th
right derived functor of $\Cohom^*_\Gamma(L,-)$
where $*$ indicates the internal degree shift.
When the grading is trivial (i.e., concentrated
in degree $0$) we write $\Cohom^{s}_\Gamma(L,-)$
and $\Coext^{s}_\Gamma(L,-)$.

If $G$ is a finite group and $R$ is a (possibly
graded) commutative ring, then the group ring
$R[G]$ is a Hopf algebra over $R$ and its dual
$R(G)=\Hom_R(R[G],R)$ forms a commutative Hopf
algebra $(R(G),R)$. Every left $R(G)$-comodule
becomes a right $R[G]$-module in a natural way, 
and vice versa. Moreover, for a left $R(G)$-comodule
$L$ there is a natural isomorphism
\[
\Cohom^*_{R(G)}(L,-) \iso \Hom^*_{R[G]}(L,-)
\]
and this induces natural isomorphisms of right
derived functors
\begin{equation}\label{eq:Coext-Ext}
\Coext^{s,*}_{R(G)}(L,-) \iso \Ext^{s,*}_{R[G]}(L,-).
\end{equation}
Finally, if $R$ is a graded ring of the form
$R=\k[w,w^{-1}]$ where $w$ has even positive
degree, then $R[G]=R\otimes_{\k}\k[G]$ and
$R(G)=R\otimes_{\k}\k(G)$. When $L$ has the
form $L=R\otimes_\k L_0$, then
\begin{equation}\label{eq:Ext-ungraded}
\Ext^{s,*}_{R[G]}(L,-) \iso
\Ext^{s,*}_{\k[G]}(L_0,-).
\end{equation}

\subsection*{Comodules for some iterated
mapping cones}
We begin by recalling that the mapping cones
of the Hopf invariant $1$ elements have the
following $BP$-homology as $BP_*(BP)$-comodules,
where $x_k$ has degree $k$ and $x_0$ is coaction
primitive. Here
\[
BP_*(\cone(\eta)) = BP_*\{x_0,x_2\},
\quad
BP_*(\cone(\nu)) = BP_*\{x_0,x_4\},
\quad
BP_*(\cone(\sigma)) = BP_*\{x_0,x_8\},
\]
with
\begin{subequations}\label{eq:BPcomodules}
\begin{align}
\psi(x_2) &= t_1\otimes x_0 + 1\otimes x_2,
\label{eq:BPcomodules-eta} \\
\psi(x_4) &= (v_1t_1+t_1^2)\otimes x_0 + 1\otimes x_4.
\label{eq:BPcomodules-nu} \\
\psi(x_8) &=
(v_{2} t_{1}- 3 t_{1}^{4}
-v_{1}^{3} t_{1} - 4 v_{1}^{2} t_{1}^{2}
- 5 v_{1} t_{1}^{3}
+ v_{1} t_{2} + 2 t_{1} t_{2})\otimes x_0 + 1\otimes x_8
\label{eq:BPcomodules-sigma} \\
&\equiv (v_{2} t_{1}+ t_{1}^{4})\otimes x_0 + 1\otimes x_8 \mod{(2,v_1)}. \notag
\end{align}
\end{subequations}
Such formulae are well-known and follow from
the fact that these homotopy elements are
detected by elements that originate in the
chromatic spectra sequence on
\begin{multline*}
v_1/2\in\Coext_{BP_*(BP)}^{0,2}(BP_*,BP_*/2^\infty),
\quad
v_1^2/4\in\Coext_{BP_*(BP)}^{0,4}(BP_*,BP_*/2^\infty), \\
(v_1^4+8v_1v_2)/16\in\Coext_{BP_*(BP)}^{0,8}(BP_*,BP_*/2^\infty);
\ph{\Coext_{BP_*(BP)}^{0,8}(BP_*,BP_*/2^\infty}
\end{multline*}
see \cites{MRW:PerPhenANSS,DCR:NovicesGuide}
for details.

We require a computational result that ought 
to be standard but we do not know a convenient 
reference.
\begin{lem}\label{lem:IteratedMappingCones}
For $BP_*(S^0\cup_\nu e^4\cup_\eta e^6)$ 
there is a $BP_*$-basis $x_0,x_4,x_6$ with 
$BP_*(BP)$-coaction given by
\begin{align*}
\psi(x_0) &= 1\otimes x_0, \\
\psi(x_4) &= (v_1t_1+t_1^2)\otimes x_0 + 1\otimes x_4, \\
\psi(x_6) &= 
\bigl(t_2+(2/3)t_1^3+v_1t_1^2\bigr)\otimes x_0 + t_1\otimes x_4 + 1\otimes x_6.
\end{align*}
\end{lem}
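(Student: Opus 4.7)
The plan is to build $X := S^0 \cup_\nu e^4 \cup_\eta e^6$ from its subcomplex $\cone(\nu)$ and exploit the cofibre sequence $\cone(\nu)\hookrightarrow X\to S^6$, which yields a short exact sequence of $BP_*(BP)$-comodules
$0\to BP_*(\cone(\nu))\to BP_*(X)\to BP_*(S^6)\to 0$.
This endows $BP_*(X)$ with a $BP_*$-basis $x_0,x_4,x_6$ in which $x_0$ and $x_4$ carry the coactions inherited from $\cone(\nu)$ as recorded in \eqref{eq:BPcomodules-nu}, while $x_6$ is any lift of the generator of $BP_6(S^6)$.

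Next I would write $\psi(x_6)=f_0\otimes x_0+f_4\otimes x_4+1\otimes x_6$ for undetermined $f_0\in BP_6(BP)$ and $f_4\in BP_2(BP)$. Since $BP_2(BP)=\Z_{(2)}\cdot t_1$, we have $f_4=c\,t_1$ for some $c\in\Z_{(2)}$, and the value $c=1$ reflects the fact that the $6$-cell is attached by $\eta$ to the $4$-cell: the class $t_1\in\Ext^{1,2}_{BP_*(BP)}(BP_*,BP_*)$ detects $\eta$ in the Adams--Novikov $E_2$-page, and a rescaling of $x_6$ arranges $c=1$.

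Coassociativity $(1\otimes\psi)\psi(x_6)=(\psi\otimes 1)\psi(x_6)$, combined with $\psi(t_1)=t_1\otimes 1+1\otimes t_1$, then reduces to the cocycle equation
$\psi(f_0)-f_0\otimes 1-1\otimes f_0 = t_1\otimes(v_1 t_1+t_1^2)$.
Plugging in the ansatz $f_0=t_2+\alpha t_1^3+\beta v_1t_1^2+\gamma v_1^2 t_1$, which is the general degree-$6$ element modulo primitives, and using $\eta_R(v_1)=v_1+2t_1$ together with the standard coproduct of $t_2$ in $BP_*(BP)$ at $p=2$, one solves for $\alpha=2/3$, $\beta=1$, $\gamma=0$; the denominator $3$ causes no harm since it is a unit in $\Z_{(2)}$.

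The main obstacle I anticipate is the bookkeeping in the final cocycle verification: the coproduct on $t_2$ and the right unit on $v_1$ interact in a subtle way, and the degree-$6$ contributions in $BP_*(BP)\otimes BP_*(BP)$ must be tracked carefully to produce exactly $v_1 t_1\otimes t_1+t_1\otimes t_1^2$ after the coboundary cancellations. Note that $f_0$ is determined only modulo degree-$6$ primitives, so the stated normalisation amounts to a specific convention for the lift $x_6$; any other choice differs by a primitive that can be absorbed into $x_6$.
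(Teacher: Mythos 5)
Your proof follows the paper's argument: both reduce the claim to a cocycle equation for the $x_0$-coefficient of $\psi(x_6)$ via coassociativity, and then produce the explicit class $t_2+(2/3)t_1^3+v_1t_1^2$ satisfying it. One small correction: $BP_2(BP)=\Z_{(2)}\{v_1,t_1\}$, not $\Z_{(2)}\cdot t_1$ alone, but the coassociativity constraint on the $x_4$-coefficient forces $f_4$ to be a coproduct-primitive, which rules out $v_1$ and still gives $f_4=c\,t_1$.
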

\begin{proof}
We only need to verify the last coaction
and only the term involving $x_0$ is 
unclear. Suppose that
\[
\psi(x_6) = \theta\otimes x_0 + t_1\otimes x_4 + 1\otimes x_6.
\]
Then by coassociativity we obtain
\begin{multline*}
\psi(\theta)\otimes x_0 
+ 1\otimes t_1\otimes x_4 + t_1\otimes 1\otimes x_4
+ 1\otimes1\otimes x_6              \\
=
\theta\otimes1\otimes x_0 
+ t_1\otimes(v_1t_1+t_1^2)\otimes x_4 + t_1\otimes1\otimes x_4
+ 1\otimes\theta\otimes x_0 + 1\otimes t_1\otimes x_4 + 1\otimes1\otimes x_6
\end{multline*}
and so
\begin{align*}
\psi(\theta) &= 
1\otimes\theta + t_1\otimes(v_1t_1+t_1^2) + \theta\otimes1  \\
&= 
1\otimes\theta + t_1(v_1+2t_1)\otimes v_1t_1 + t_1\otimes t_1^2 + \theta\otimes1  \\
&= 
1\otimes\theta + v_1t_1\otimes t_1 + 2t_1^2\otimes t_1 + t_1\otimes t_1^2 + \theta\otimes1.
\end{align*}
A calculation shows that 
\[
\psi(t_2+(2/3)t_1^3+v_1t_1^2) =
1\otimes(t_2+(2/3)t_1^3+v_1t_1^2) 
+ t_1\otimes(v_1t_1+t_1^2) 
+(t_2+(2/3)t_1^3+v_1t_1^2)\otimes1.
\]
So we obtain the formulae stated.
\end{proof}

Now given a map of ring spectra $BP\to E$, where
$E$ is Landweber exact, these $BP_*(BP)$-comodules
map to $E_*(E)$-comodules $E_*(\cone(\eta))$ and
$E_*(\cone(\nu))$. Our main interest will focus
on the examples $E=E_2$ (the height $2$ Lubin-Tate 
spectrum) and $E=K_2$ (the $2$-periodic Morava 
$K$-theory spectrum with coefficients in $\F_4$). 
In the latter case we have $(K_2)_* = \F_4[u,u^{-1}]$ 
where $u\in(K_2)_2$ (so $u^3=v_2$) and we set
$\bar{x}_{2k}=u^{-k}x_{2k}\in(K_2)_0(\cone(\gamma))$
when $\gamma=\eta,\nu$. The Hopf algebroid
here is
\[
(K_2)_*(E_2) =
(K_2)_*[\alpha_r : r\geq0]/(\alpha_0^{3}-1,\,\alpha_r^4-\alpha_r : r\geq1),
\]
where the right unit on $u$ is $\eta_{\mathrm{r}}(u)=u\alpha_0$
and the image of $t_k\in BP_{2^{k+1}-2}(BP)$
is
\[
u^{2^{k}-1}\alpha_k\in(K_2)_{2^{k+1}-2}(E_2).
\]
It is standard that every element of $\mathcal{O}_2^\times$
has a unique series expansion as $\sum_{r\geq0}a_rS^r$,
where the Teichm\"uller representatives~$a_r$ 
satisfy
\[
a_0^{3}=1,\qquad a_r^4=a_r\quad(r\geq1).
\]
Then we may identify $(K_2)_0(E_2)$ with
the algebra of continuous maps $\mathcal{O}_2^\times\to\F_4$
and then $\alpha_k$ is identified with the
locally constant function given by
\[
\alpha_k\biggl(\sum_{r\geq0}a_rS^r\biggr) = a_k.
\]

The left $(K_2)_*(E_2)$-coaction on a
comodule $M_*$ induces an adjoint right
action of $\mathcal{O}_2^\times$. For
any finite subgroup $G\leq\mathcal{O}_2^\times$
there is an induced action of the skew
group ring $(K_2)^*\langle G\rangle$.
This also gives a right action of
$\F_4\langle G\rangle$ on each $M_k$.
Of course we are using the $(K_2)_*$-linear
pairing $M_*\otimes_{(K_2)_*}M^*\to(K_2)_*$
to define this. Standard linear algebra
says that when $M_*$ is finite dimensional
over $(K_2)_*$, given a basis for $M_*$ and
the dual basis for $M^*=\Hom_{(K_2)_*}(M_*,(K_2)_*)$,
the matrices for expressing the action
on $M_*$ and its adjoint action on~$M^*$
are mutually transpose.

\subsection*{$(K_2)_0(\cone(\eta))$}
Here we have the coaction formulae
\[
\bar{x}_0\mapsto 1\otimes\bar{x}_0,
\quad
\bar{x}_2\mapsto \alpha_1\otimes\bar{x}_0 + \alpha_0\otimes\bar{x}_2,
\]
and the right action $Q_8$  has matrix
representations with respect to the basis
$\bar{x}_0,\bar{x}_2$ obtained from the
above discussion together
with~\eqref{eq:Q8explicit}
and~\eqref{eq:Q8explicit-mod}.
\[
i\: \begin{bmatrix} 1 & 1 \\ 0 & 1 \end{bmatrix},
\quad
j\: \begin{bmatrix} 1 & \omega^2 \\ 0 & 1 \end{bmatrix}.
\]

\subsection*{$(K_2)_0(\cone(\nu))$}
The coaction is
\[
\bar{y}_0\mapsto 1\otimes\bar{y}_0,
\quad
\bar{y}_4\mapsto \alpha_1^2\otimes\bar{y}_0 + \alpha_0^2\otimes\bar{y}_4,
\]
and the matrix representation is
\[
i\: \begin{bmatrix} 1 & 1 \\ 0 & 1 \end{bmatrix},
\quad
j\: \begin{bmatrix} 1 & \omega \\ 0 & 1 \end{bmatrix}.
\]

\subsection*{$(K_2)_0(S^0\cup_\nu e^4\cup_\eta e^6)$}
Using Lemma~\ref{lem:IteratedMappingCones}, we can
find a basis
$\bar{z}_0,\bar{z}_4,\bar{z}_6\in(K_2)_0(S^0\cup_\nu e^4\cup_\eta e^6)$
and
\begin{equation}\label{eq:DbleUpside?}
\bar{z}_0\mapsto 1\otimes\bar{z}_0,
\quad
\bar{z}_4\mapsto \alpha_1^2\otimes\bar{z}_0 + \alpha_0^2\otimes\bar{z}_4,
\quad
\bar{z}_6\mapsto \alpha_2\otimes\bar{z}_0 + \alpha_0^2\alpha_1\otimes\bar{z}_4 + 1\otimes\bar{z}_6,
\end{equation}

\[
i\: \begin{bmatrix} 1 & 1 & \omega \\ 0 & 1 & 1 \\ 0 & 0 & 1 \end{bmatrix},
\quad
j\: \begin{bmatrix} 1 & \omega & \omega \\ 0 & 1 & \omega^2 \\ 0 & 0 & 1 \end{bmatrix}.
\]
These are the matrices for the adjoint 
of the representation~$W_3$ in terms of 
the basis in~\eqref{eq:W3-newbasis},
i.e., the transposes of the matrices 
in~\eqref{eq:W3-newmatrices}.

\subsection*{$K_2^0(\cone(\sigma))$}
Here the relation $\alpha_1^{4} + \alpha_1=0$
gives
\[
\bar{x}_0\mapsto 1\otimes\bar{x}_0,
\quad
\bar{x}_8\mapsto
(\alpha_1^{4} + \alpha_1)\otimes\bar{x}_0 + \alpha_0\otimes\bar{x}_8
= \alpha_0\otimes\bar{x}_8,
\]
so $i,j$ act trivially.

\subsection*{$(K_2)_0(S^0\cup_\sigma e^8\cup_\nu e^{12})$}
Here we have a basis
$\bar{z}_0,\bar{z}_8,\bar{z}_{12}\in(K_2)_0(S^0\cup_\sigma e^8\cup_\nu e^{12})$
and
\[
\bar{z}_0\mapsto 1\otimes\bar{z}_0,
\quad
\bar{z}_8\mapsto \alpha_0\otimes\bar{z}_8,
\quad
\bar{z}_{12}\mapsto *\otimes\bar{z}_0 + \alpha_1^2\otimes\bar{z}_8 + \alpha_0^2\otimes\bar{z}_{12},
\]
\[
i\: \begin{bmatrix} 1 & 0 & * \\ 0 & 1 & 1 \\ 0 & 0 & 1 \end{bmatrix},
\quad
j\: \begin{bmatrix} 1 & 0 & * \\ 0 & 1 & \omega \\ 0 & 0 & 1 \end{bmatrix}.
\]
Here $Z(Q_8)$ acts trivially so the
representation factors through the
abelianisation, hence this does not 
give a stably invertible $Q_8$-module.
The precise form of the starred terms 
can be determined by a similar method 
to that used in the proof of 
Lemma~\ref{lem:IteratedMappingCones}.
\bigskip
%
%
%

\section{More on modular representations 
of $Q_8$}
\label{sec:Q8reps} 

The reader may find it useful to relate 
the results in this section to 
Ravenel~\cite{DCR:CohomMoravaStabAlgs}*{proposition~3.5}.

Recall that for any field $\mathbf{k}$ of
characteristic~$2$, the cohomology of
$\mathbf{k}[Q_8]$ has the form
\begin{equation}\label{eq:ExtQ8}
\Ext_{\mathbf{k}[Q_8]}^*(\mathbf{k},\mathbf{k})
=
\mathbf{k}[\mathrm{u},\mathrm{v},\mathrm{w}]/
(\mathrm{u}^2+\mathrm{u}\mathrm{v}+\mathrm{v}^2,
\mathrm{u}^2\mathrm{v}+\mathrm{u}\mathrm{v}^2,
\mathrm{u}^3,\mathrm{v}^3),
\end{equation}
where $\mathrm{u},\mathrm{v}$ have degree~$1$
and $\mathrm{w}$ has degree~$4$; see for example
Adem \& Milgram~\cite{AA-RJM:CohomFinGps}*{lemma~IV.2.10}.

Of course
$\Ext_{\mathbf{k}[Q_8]}^1(\mathbf{k},\mathbf{k})$
can be identified with the group of all
homomorphisms $Q_8\to\mathbf{k}$ into
the additive group of~$\mathbf{k}$. We
will need to make explicit choices for
the generators and we define them to
be the homomorphisms
$\mathrm{u},\mathrm{v}\:Q_8\to\mathbf{k}$
given by
\[
\mathrm{u}(i)=1,\;\mathrm{u}(j)=0,
\;\mathrm{v}(i)=0,\;\mathrm{v}(j)=1.
\]
The functions $\alpha_1,\alpha_1^2\:Q_8\to\mathbf{k}$
are also homomorphisms and can be expressed
as
\[
\alpha_1 = \mathrm{u}+\omega^2\mathrm{v},\;
\alpha_1^2 = \mathrm{u}+\omega\mathrm{v}.
\]

Notice that when $\mathbf{k}$ does not
contain a primitive cube root of unity,
$\mathrm{u}^2+\mathrm{u}\mathrm{v}+\mathrm{v}^2$
does not factor, but if $\omega\in\mathbf{k}$
is a primitive cube root of unity then
\[
\mathrm{u}^2+\mathrm{u}\mathrm{v}+\mathrm{v}^2
=
(\mathrm{u}+\omega\mathrm{v})(\mathrm{u}+\omega^2\mathrm{v}).
\]
This means that the Massey product
$\langle \mathrm{u}+\omega^2 \mathrm{v},
\mathrm{u}+\omega \mathrm{v},\mathrm{u}+\omega^2\mathrm{v}\rangle
\subseteq\Ext_{\mathbf{k}[Q_8]}^2(\mathbf{k},\mathbf{k})$
is defined and this has indeterminacy
$\mathbf{k}\{\mathrm{u}^2+\omega \mathrm{v}^2\}$.

The Massey product
\[
\langle[t_1],[v_1t_1+t_1^2],[t_1]\rangle
= \{[v_1t_1+t_1^2]^2\}
\subseteq\Coext_{BP_*(BP)}^{2,8}(BP_*,BP_*)
\]
corresponds to the Toda bracket
\[
\langle\eta,\nu,\eta\rangle=\{\nu^2\}\subseteq\pi_6(S).
\]
We can exploit naturality in cohomology of
Hopf algebroids together with \eqref{eq:Coext-Ext}
and \eqref{eq:Ext-ungraded} to obtain an algebra
homomorphism
\[
\Coext_{BP_*(BP)}^*(BP_*,BP_*)
\to\Ext_{(K_2)_*[Q_8]}^*((K_2)_*,(K_2)_*)
\xrightarrow{\iso}
(K_2)_*\otimes_{\F_4}\Ext_{\F_4[Q_8]}^*(\F_4,\F_4).
\]
Our calculations show that under this
\[
[t_1]\mapsto u(\mathrm{u}+\omega^2 \mathrm{v}),
\quad
[v_1t_1+t_1^2]\mapsto u^2(\mathrm{u}+\omega \mathrm{v}),
\]
hence
$\langle\mathrm{u}+\omega^2 \mathrm{v},
\mathrm{u}+\omega \mathrm{v},
\mathrm{u}+\omega^2 \mathrm{v}\rangle$
must contain $(\mathrm{u}+\omega\mathrm{v})^2
=\mathrm{u}^2+\omega^2\mathrm{v}^2$. It
follows that for any extension field
$\mathbf{k}$ of~$\F_4$,
\[
\langle \mathrm{u}+\omega^2 \mathrm{v},
\mathrm{u}+\omega \mathrm{v},
\mathrm{u}+\omega^2\mathrm{v}\rangle
=
\mathbf{k}\{\mathrm{u}+\omega\mathrm{v}\}+(\mathrm{u}^2+\omega^2\mathrm{v}^2)
\varsubsetneq
\Ext_{\mathbf{k}[Q_8]}^2(\mathbf{k},\mathbf{k}).
\]
Of course this could also be verified
directly using a good choice of
resolution of $\mathbf{k}$ over
$\mathbf{k}[Q_8]$.

We remark that the Massey product
\[
\langle[v_1t_1+t_1^2],[t_1],[v_1t_1+t_1^2]\rangle
\subseteq\Coext_{BP_*(BP)}^{2,10}(BP_*,BP_*)
\]
corresponds to the Toda bracket
\[
\langle\nu,\eta,\nu\rangle=\{\eta\sigma+\epsilon\}\subseteq\pi_8(S),
\]
and is related to the Massey product
\[
\langle \mathrm{u}+\omega\mathrm{v},
\mathrm{u}+\omega^2\mathrm{v},
\mathrm{u}+\omega\mathrm{v}\rangle
=
\mathbf{k}\{\mathrm{u}+\omega^2\mathrm{v}\}+(\mathrm{u}^2+\omega\mathrm{v}^2)
\varsubsetneq
\Ext_{\mathbf{k}[Q_8]}^2(\mathbf{k},\mathbf{k}).
\]

\subsection*{$5$-dimensional endotrivial modules 
for $\mathbf{k}[Q_8]$}
There are in fact two distinct $5$-dimensional 
endotrivial modules for $\mathbf{k}[Q_8]$ (this 
was pointed out to the author by Dave Benson) 
and we discuss some implications of this. We 
follow the notation of Dade~\cite{ECD:ExtnThmHalHigman}*{section~1}
with minor changes. 

In $\mathbf{k}[Q_8]$ we take the elements
\[
X= \omega i + \omega^2 j + k,
\quad
Y = \omega^2 i + \omega j + k
\]
which are in the augmentation ideal and 
satisfy the relations
\begin{equation}\label{eq:kQ8relations}
X^2 = YXY,
\quad 
Y^2 = XYX,
\quad 
XYXY = YXYX = \sum_{g\in Q_8}g,
\end{equation}
where the last element is a generator 
of the socle and so is an integral of 
the Hopf algebra $\mathbf{k}[Q_8]$.
This gives a $\mathbf{k}$-basis
\[
1,X,Y,YX,XY,XYX,YXY,XYXY=YXYX.
\]

The module $W_3$ has a basis $w_1,w_2,w_3$
for which the action of $Q_8$ is given by
\eqref{eq:W3-action}, so the actions of~$X$ 
and~$Y$ are given by
\[
\left\{
\begin{aligned}
\quad
Xw_1 &=0, &\quad Yw_1 &= \omega^2 w_2, \\
\quad
Xw_2 &= \omega w_3, &\quad Yw_2 &= 0, \\
\quad 
Xw_3 &= 0, &\quad Yw_3 &= 0. 
\end{aligned}
\right.
\]
This module is isomorphic to the cyclic quotient 
module
\[
\mathbf{k}[Q_8]/\mathbf{k}\{X,YX,XYX,YXY,XYXY\}.
\]
There is also the  cyclic quotient module
\[
\mathbf{k}[Q_8]/\mathbf{k}\{Y,XY,XYX,YXY,XYXY\}.
\]
These have the module structures shown where 
solid lines indicate multiplication by~$X$, 
dotted lines indicate multiplication by~$Y$ 
and the symbols indicate representatives of 
residue classes.
\begin{center}
\begin{tikzpicture}[scale=0.8]
\Text[y=0]{$M'$}
\Vertex[y=-1,size=.05,color=black]{B1}
\Text[y=-1,position=left,distance=1mm]{\tiny$XY$}
\Vertex[y=-3,size=.05,color=black]{B-1}
\Text[y=-3,position=left,distance=1mm]{\tiny$Y$}
\Vertex[y=-4,size=.05,color=black]{B-2}
\Text[y=-4,position=right,distance=1mm]{\tiny$1$}
\Edge[lw=0.75pt,bend=45,label={$X\cdot$},position=left](B-1)(B1)
\Edge[lw=0.75pt,bend=-30,label={$Y\cdot$},position=left,style=dotted](B-2)(B-1)
\Edge[lw=0.75pt,bend=-45,style=dashed](B-2)(B1)
\end{tikzpicture}
\qquad\qquad\qquad
\begin{tikzpicture}[scale=0.8]
\Text[y=0]{$M''$}
\Vertex[y=-1,size=.05,color=black]{B1}
\Text[y=-1,position=left,distance=1mm]{\tiny$YX$}
\Vertex[y=-3,size=.05,color=black]{B-1}
\Text[y=-3,position=right,distance=1mm]{\tiny$X$}
\Vertex[y=-4,size=.05,color=black]{B-2}
\Text[y=-4,position=right,distance=1mm]{\tiny$1$}
\Edge[lw=0.75pt,bend=-45,style=dotted](B-1)(B1)
\Edge[lw=0.75pt,bend=30](B-2)(B-1)
\Edge[lw=0.75pt,bend=45,style=dashed](B-2)(B1)
\end{tikzpicture}
\end{center}
In each case the central subalgebra $\mathbf{k}[Z(Q_8)]$
acts so that multiplication by~$i^2-1$ is
given by the dashed line. These are both 
endotrivial $\mathbf{k}[Q_8]$-modules by 
Chouinard's Theorem~\cite{NM:EndoTrivBook}*{theorem~2.1}. 
The $5$-dimensional modules~$\Omega M'$ 
and~$\Omega M''$ are also endotrivial.

There are two $3$-dimensional left ideals 
of $\mathbf{k}[Q_8]$,
\[
L' = \mathbf{k}[Q_8]\{XY\} = \mathbf{k}\{XY,YXY,XYXY\},
\quad
L'' = \mathbf{k}[Q_8]\{YX\} = \mathbf{k}\{YX,XYX,YXYX\},
\]
with  endotrivial quotient modules~$J'=\mathbf{k}[Q_8]/L'$
and~$J''=\mathbf{k}[Q_8]/L''$. Notice that 
$L'\iso M'$ and $L''\iso M''$, while~$J'$ 
and~$J''$ are both stably self-inverse.

\begin{center}
\begin{tikzpicture}[scale=0.8]
\Vertex[y=0,size=.05,color=black]{B2}
\Text[y=0,position=left,distance=1mm]{\tiny$YXY$}
\Vertex[y=-1,size=.05,color=black]{B1}
\Text[y=-1,position=left,distance=1mm]{\tiny$XY$}
\Vertex[y=-2,size=.05,color=black]{B0}
\Text[y=-2,position=right,distance=1mm]{\tiny$X$}
\Vertex[y=-3,size=.05,color=black]{B-1}
\Text[y=-3,position=left,distance=1mm]{\tiny$Y$}
\Vertex[y=-4,size=.05,color=black]{B-2}
\Text[y=-4,position=right,distance=1mm]{\tiny$1$}
\Edge[lw=0.75pt,bend=-45,position=right](B0)(B2)
\Edge[lw=0.75pt,bend=30,style=dotted](B1)(B2)
\Edge[lw=0.75pt,bend=45](B-1)(B1) 
\Edge[lw=0.75pt,bend=30,style=dotted](B-2)(B-1) 
\Edge[lw=0.75pt,bend=-45](B-2)(B0)
\end{tikzpicture}
\qquad\qquad\qquad
\begin{tikzpicture}[scale=0.8]
\Vertex[y=0,size=.05,color=black]{B2}
\Text[y=0,position=left,distance=1mm]{\tiny$XYX$}
\Vertex[y=-1,size=.05,color=black]{B1}
\Text[y=-1,position=left,distance=1mm]{\tiny$YX$}
\Vertex[y=-2,size=.05,color=black]{B0}
\Text[y=-2,position=right,distance=1mm]{\tiny$Y$}
\Vertex[y=-3,size=.05,color=black]{B-1}
\Text[y=-3,position=left,distance=1mm]{\tiny$X$}
\Vertex[y=-4,size=.05,color=black]{B-2}
\Text[y=-4,position=right,distance=1mm]{\tiny$1$}
\Edge[lw=0.75pt,bend=-45,position=right,style=dotted](B0)(B2)
\Edge[lw=0.75pt,bend=30](B1)(B2)
\Edge[lw=0.75pt,bend=45,style=dotted](B-1)(B1)
\Edge[lw=0.75pt,bend=30](B-2)(B-1)
\Edge[lw=0.75pt,bend=-45,style=dotted](B-2)(B0)
\end{tikzpicture}
\end{center}

Clearly $J'$ and $J''$ are not isomorphic,
and from the known structure of the Picard 
group of the stable module category 
$\Pic(\mathbf{k}[Q_8])\iso C_4\times C_2$
we must have $J''\iso\Omega^2 J'$.

The module $J'$ corresponds to our double 
Joker complex, but~$J''$ seems not to be 
realisable as~$K_2^*(Z)$ for a CW spectrum.
The corresponding $\StA(2)$-module is 
$\Omega H^*(Q^\text{\textquestiondown})$
and there is no $\tmf$-module spectrum
$M$ for which 
$H_{\tmf}^*(M)\iso\Omega H^*(Q^\text{\textquestiondown})$,
in particular there is no CW spectrum $Z$ 
for which
\[
H_{\tmf}^*(\tmf\wedge Z)\iso H^*(Z)
\iso\Omega H^*(Q^\text{\textquestiondown})
\] 
as $\StA(2)$-modules.

\begin{center}
\begin{tikzpicture}[scale=0.8]
\Text[y=6]{$\Omega H^*(Q^\text{\textquestiondown})$}
\Vertex[x=0,y=5,size=.1,color=black]{A10}
\Vertex[x=0,y=4,size=.1,color=black]{A8}
\Vertex[x=0,y=3,size=.1,color=black]{A6}
\Vertex[x=0,y=2,size=.1,color=black]{A4}
\Vertex[x=0,y=0,size=.1,color=black]{A0}

\Edge[lw=0.75pt,bend=-45](A8)(A10)
\Edge[lw=0.75pt,bend=45](A6)(A10)
\Edge[lw=0.75pt,bend=-45,label={$\Sq^2$},position=right](A4)(A6)
\Edge[lw=0.75pt,bend=45,label={$\Sq^4$},position=left](A0)(A4)
\end{tikzpicture}
\end{center}

\section{The action of $G_{24}$}\label{sec:G24action}
In this section we briefly discuss the 
action of the group $G_{24}$ of order~$24$
discussed in Example~\ref{examp:Q8}. This 
a is a split extension containing~$Q_8$
as a normal subgroup, 
$G_{24}\iso C_3\ltimes Q_8$. As a subgroup 
of the stabilizer group~$\mathbb{G}_2$ 
this is generated by~$i,j,\omega$, and 
by~\eqref{eq:Q8explicit},
\[
\omega i\omega^{-1}=j,
\quad
\omega j\omega^{-1}=k,
\quad
\omega k\omega^{-1}=i.
\]
Here we identify $C_3$ with the subgroup 
generated by~$\omega$.

The right action of $G_{24}$ on 
$(K_2)_*(S^0\cup_\nu e^4\cup_\eta e^6)$ 
in terms of the generators 
$z_k=u^k\bar{z}_{k}\in(K_2)_{2k}(S^0\cup_\nu e^4\cup_\eta e^6)$
inherited from $BP_*(S^0\cup_\nu e^4\cup_\eta e^6)$ 
can be deduced using~\eqref{eq:DbleUpside?}:
\begin{equation}\label{eq:G24rightaction}
\left\{
\begin{aligned}
z_0\cdot i &= z_0, & z_0\cdot j &= z_0, & z_0\cdot \omega &= z_0, \\
z_4\cdot i &= z_4+u^2z_0, & z_4\cdot j &= z_4+\omega u^2z_0, & z_4\cdot\omega &= z_4, \\
z_6\cdot i &= z_6+ uz_4 + \omega u^3z_0, & z_6\cdot j &= z_6 + \omega^2 uz_4 +\omega u^3z_0, & z_6\cdot\omega &= z_6. \\
\end{aligned}
\right.
\end{equation}

Using Brauer characters it is routine to 
verify that $\F_4[G_{24}]$ has~$3$ simple 
modules each of which is $1$-dimensional 
with $8$-dimensional projective cover. The 
summands in the corresponding decomposition 
of the module 
$(K_2)_*(S^0\cup_\nu e^4\cup_\eta e^6)$ is 
obtained from the subspace of $C_3$-invariants 
on multiplying them by~$1,u,u^2$. 

The $C_3$-invariants in 
$(K_2)_*(S^0\cup_\nu e^4\cup_\eta e^6)$ 
is isomorphic to $(K(2)\F_4)_*(S^0\cup_\nu e^4\cup_\eta e^6)$,
i.e., the original Morava $K$-theory with 
coefficients in $\F_4$:
\[
(K(2)\F_4)_*(-) = \F_4\otimes_{\F_2} K(2)_*(-).
\]
This is of course a module over the graded field
$(K(2)\F_4)_*=\F_4[v_2,v_2^{-1}]=\F_4[u^3,u^{-3}]$.
Furthermore it has an action of the skew 
Hecke algebra 
$(K_2)_*^{C_3}\{C_3\backslash G_{24}/C_3\}\iso(K(2)\F_4)_*\{Q_8\}$
discussed in Appendix~\ref{app:CrossProd}.

Remembering that the right action of $\omega$ 
on $u^k$ satisfies $u^k\mapsto\omega^{-k}u^k=\omega^{2k}u^k$,
we find that the following $8$ elements form 
a $(K(2)\F_4)_*$-basis for
$(K_2)_*^{C_3}\{C_3\backslash G_{24}/C_3\}\iso(K(2)\F_4)_*\{Q_8\}$:
\begin{align*}
1H,\; & i^2H=j^2H=k^2H, \\
iH+jH+kH,\;& i^3H+j^3H+k^3H, \\
u(iH+\omega^2jH+\omega kH),\; & u(i^3H+\omega^2j^3H+\omega k^3H),\; \\
u^{-1}(iH+\omega jH+\omega^2kH),\; &
u^{-1}(i^3H+\omega j^3H+\omega^2k^3H).
\end{align*}
Their actions on
$(K(2)\F_4)_*(S^0\cup_\nu e^4\cup_\eta e^6)$ 
have the following matrices with respect to 
the basis $z_0,z_4,z_6$:
\begin{align*}
\begin{bmatrix}
1 & 0 & 0 \\
0 & 1 & 0 \\
0 & 0 & 1 
\end{bmatrix},\; &
\begin{bmatrix}
1 & 0 & u^3 \\
0 & 1 & 0 \\
0 & 0 & 1 
\end{bmatrix},  \\
\begin{bmatrix}
1 & 0 & \omega u^3 \\
0 & 1 & 0 \\
0 & 0 & 1 
\end{bmatrix},\; & 
\begin{bmatrix}
1 & 0 & \omega^2u^3 \\
0 & 1 & 0 \\
0 & 0 & 1 
\end{bmatrix},  \\
\begin{bmatrix}
0 & u^3 & 0 \\
0 & 0 & 0 \\
0 & 0 & 0 
\end{bmatrix},\; &
\begin{bmatrix}
0 & u^3 & 0 \\
0 & 0 & 0 \\
0 & 0 & 0 
\end{bmatrix},  \\
\begin{bmatrix}
0 & 0 & 0 \\
0 & 0 & 1 \\
0 & 0 & 0 
\end{bmatrix},\;  &
\begin{bmatrix}
0 & 0 & 0 \\
0 & 0 & 1 \\
0 & 0 & 0 
\end{bmatrix}. 
\end{align*}

\bigskip
\section*{Concluding remarks}
The main import of this paper is the
appearance of unexpected relationships
between seemingly disparate topics. It 
has long been noted that there appear 
to be connections between the cohomology 
of some of the~$\StA(n)$ and that of 
finite groups. The case of~$Q_8$ is 
one where such connections have been 
observed and we provide further evidence 
of this. However, it is unclear whether 
there are other examples, perhaps in 
higher chromatic heights.


\bigskip\hrule\bigskip

\appendix

\section{Skew group rings and their modules}
\label{app:CrossProd}

The results in this appendix are aimed at
the specific circumstances that occur in
chromatic homotopy theory. More general
statements on skew (or twisted) group
rings can be found in
Passman~\cite{DSP:InfCrossProd}*{section~4};
Lam~\cite{TYL:NonCommRings}*{chapter~7} is 
a good source on local and semilocal rings. 
We also discuss skew Hecke algebras which 
do not seem to be extensively documented.

\subsection*{Skew group rings}
We begin with a result including 
both~\cite{TYL:NonCommRings}*{theorem~20.6}
and~\cite{DSP:InfCrossProd}*{theorem~4.2}
as special cases.

Recall from Lam~\cite{TYL:NonCommRings}*{\S20}
that a ring $A$ is \emph{semilocal} if $A/\rad A$
is semisimple, where~$\rad A$ is the Jacobson 
radical of~$A$.
\begin{prop}\label{prop:RingThyResult}
Suppose that $A\subseteq B$ is a semilocal
subring where~$B$ is finitely generated as
a left $A$-module. Let $\mathfrak{a}\lhd A$
be a radical ideal, $\mathfrak{b}=\rad B\lhd B$
and $B\mathfrak{a}\subseteq\mathfrak{a}B$.
Then \\
\emph{(a)}
$\mathfrak{a}\subseteq\mathfrak{b}$;  \\
\emph{(b)}
$B$ is semilocal;  \\
\emph{(c)} There is a $k\geq1$ such that
$\mathfrak{b}^k\subseteq B\mathfrak{a}$.
\end{prop}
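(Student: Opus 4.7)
The plan is to prove (a), (b), and (c) in order, using the normality hypothesis $B\mathfrak{a}\subseteq\mathfrak{a}B$ crucially in each step; in particular, normality makes $\mathfrak{a}B$ a two-sided ideal of $B$. Throughout, I read ``radical ideal'' as $\mathfrak{a}\subseteq\rad A$, so that Nakayama's lemma applies.

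For (a), I pick an arbitrary simple left $B$-module $M$ and show $\mathfrak{a}M=0$. Since $B$ is finitely generated as a left $A$-module and $M$ is $B$-cyclic, $M$ is finitely generated as an $A$-module. The $A$-submodule $\mathfrak{a}M$ is promoted to a $B$-submodule by the normality calculation $B\cdot\mathfrak{a}M = (B\mathfrak{a})M \subseteq (\mathfrak{a}B)M = \mathfrak{a}M$. Nakayama then gives $\mathfrak{a}M\ne M$, and simplicity forces $\mathfrak{a}M=0$. Running over all simple left $B$-modules, $\mathfrak{a}$ lies in every annihilator, hence in $\mathfrak{b}$.

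For (b) and (c), I pass to $\bar B:=B/\mathfrak{a}B$, a genuine ring by normality. From (a), $\mathfrak{a}B\subseteq\mathfrak{b}$, so the maximal left ideals of $B$ and of $\bar B$ correspond; this gives $\rad\bar B=\mathfrak{b}/\mathfrak{a}B$ and $\bar B/\rad\bar B\iso B/\mathfrak{b}$, whence $B$ is semilocal iff $\bar B$ is. Since $\bar B$ is finitely generated as a module over the semilocal ring $\bar A:=A/\mathfrak{a}$, part (b) follows from the classical result (\cf\ Lam~\cite{TYL:NonCommRings}*{Theorem~20.6}) that a ring finitely generated as a module over a semilocal subring is itself semilocal. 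For (c), in the model case $\mathfrak{a}=\rad A$, $\bar A$ is semisimple Artinian, so $\bar B$ is Artinian as a module over $\bar A$ (and hence as a left module over itself), making $\rad\bar B$ nilpotent; this yields $\mathfrak{b}^k\subseteq\mathfrak{a}B$, and the stated sharpening $\mathfrak{b}^k\subseteq B\mathfrak{a}$ is obtained by a further application of the normality. The main obstacle, as I see it, is precisely this last sharpening in (c): the Artinian argument lands in $\mathfrak{a}B$, and converting to the (a priori smaller) left ideal $B\mathfrak{a}$ demands a careful rewriting of products using $B\mathfrak{a}\subseteq\mathfrak{a}B$, possibly at the cost of enlarging $k$. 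Part (a) is the conceptual heart and the only step where normality is genuinely indispensable.
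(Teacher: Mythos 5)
Parts (a) and (b) are essentially the paper's own argument. Your (a) is identical: promote $\mathfrak{a}M$ to a $B$-submodule via $B\mathfrak{a}M\subseteq\mathfrak{a}BM=\mathfrak{a}M$, apply Nakayama, and sweep over all simple $B$-modules. Your (b) also passes to $B/\mathfrak{a}B$ as the paper does (recall $B\mathfrak{a}B=\mathfrak{a}B$, so this is the paper's $B/B\mathfrak{a}B$), but citing Lam's Theorem~20.6 at that point is a little circular, since the paper introduces this Proposition precisely as a result \emph{containing} Theorem~20.6 as a special case; the paper instead argues directly that a finitely generated module over the Artinian ring $A/\mathfrak{a}$ is Artinian, hence $B/\mathfrak{a}B$ is a left Artinian ring and therefore semilocal.

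Part (c) has a genuine gap. You correctly reach $\mathfrak{b}^k\subseteq\mathfrak{a}B$ from nilpotence of the radical of $B/\mathfrak{a}B$, and then assert the sharpening $\mathfrak{b}^k\subseteq B\mathfrak{a}$ follows ``by a further application of the normality, possibly at the cost of enlarging $k$.'' But the hypothesis $B\mathfrak{a}\subseteq\mathfrak{a}B$ only lets you slide $\mathfrak{a}$-factors \emph{leftward} past $B$-factors, which moves you away from $B\mathfrak{a}$, not toward it: for example $\mathfrak{b}^{2k}\subseteq\mathfrak{a}B\mathfrak{a}B\subseteq\mathfrak{a}^2 B$, still of the form $\mathfrak{a}^jB$ and no closer to the (a priori smaller) left ideal $B\mathfrak{a}$. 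No amount of rewriting or enlarging $k$ produces $B\mathfrak{a}$ from the one-sided inclusion. The paper's proof avoids the problem by working with $B\mathfrak{a}$ from the start: it forms the descending chain of $B$-submodules $\mathfrak{b}^k/B\mathfrak{a}$ inside the Artinian $B$-module $B/B\mathfrak{a}$, stabilizes it, and applies Nakayama to the $B$-module $\mathfrak{b}^k/B\mathfrak{a}$ to conclude $\mathfrak{b}^k\subseteq B\mathfrak{a}$ directly. (In the paper's actual applications, stated just after the Proposition, one takes $B\mathfrak{a}=\mathfrak{a}B$, in which case $\mathfrak{a}B$ and $B\mathfrak{a}$ coincide and your argument is complete; but under the one-sided hypothesis as written, your conversion step is false.) Finally, your closing remark that normality is ``genuinely indispensable'' only in (a) is off: normality is also what makes $\mathfrak{a}B$ a two-sided ideal of $B$, which both (b) and (c) rely on.
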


\begin{proof}
(a) Let $M$ be a simple left $B$-module.
Then $M$ is cyclic and so is finitely
generated over $B$ and therefore over $A$.
Also,
\[
B(\mathfrak{a}M)=(B\mathfrak{a})M
\subseteq(\mathfrak{a}B)=\mathfrak{a}(BM)
\mathfrak{a}M,
\]
so $\mathfrak{a}M\subseteq M$ is a $B$-submodule.
If $\mathfrak{a}M\neq0$ then the $A$-module
$M$ satisfies $\mathfrak{a}M=M$, so by
Nakayama's Lemma, $M=0$. So we must have
$\mathfrak{a}M=0$.

Since $\mathfrak{a}$ annihilates every simple
$B$-module, $\mathfrak{a}\subseteq\mathfrak{b}$.   \\
(b) The finitely generated left $A/\mathfrak{a}$-module
$B/B\mathfrak{a}B$ is also a ring which is
left Artinian with radical $\mathfrak{b}/B\mathfrak{a}B$.
This implies that the quotient ring $B/\mathfrak{b}$
is left semisimple. \\
(c) The finitely generated left
$A/\mathfrak{a}$-module $B/B\mathfrak{a}$
is left Artinian. The $B$-submodules
$\mathfrak{b}^k/B\mathfrak{a}$ form a
decreasing chain which must stabilize,
so for some $k\geq1$,
\[
\mathfrak{b}^k/B\mathfrak{a}
=
\mathfrak{b}^{k+1}/B\mathfrak{a}
=
\mathfrak{b}(\mathfrak{b}^k/B\mathfrak{a}),
\]
By Nakayama's Lemma $\mathfrak{b}^k/B\mathfrak{a}B=0$,
hence $\mathfrak{b}^k\subseteq B\mathfrak{a}$.
\end{proof}

The special case $\mathfrak{a}=\rad A$
is particularly important. In practise
we will consider the case where
$B\mathfrak{a}=\mathfrak{a}B$
so~$B\mathfrak{a}\lhd B$. This is true
when $R$ is a ring with a group acting 
on it by automorphisms; then the radical 
$\mathfrak{r}\lhd R$ is necessarily invariant 
so we can apply our results with $A=R$ 
and $B=R\langle G\rangle$, the skew group 
ring. This recovers~\cite{DSP:InfCrossProd}*{theorem~4.2}.
We will discuss this special case in
detail, making additional assumptions
relevant in chromatic stable homotopy
theory.

Let $(R,\mathfrak{m})$ be a complete and
Hausdorff (i.e., $\bigcap_{r\geq1}\mathfrak{m}^r=0$)
Noetherian commutative local ring with
residue field $\kappa=R/\mathfrak{m}$ of
positive characteristic~$p$. Let~$G$ be
a finite group which acts on~$R$ by
(necessarily local) automorphisms, so
that~$G$ also acts on~$\kappa$ by field
automorphisms.

We can form the \emph{skew group rings}
$R\langle G\rangle$ and $\kappa\langle G\rangle$;
if the action of~$G$ on~$R$ or~$\kappa$ is
trivial then we have the ordinary group
ring~$R[G]$ or~$\kappa[G]$. The subset
\[
\mathfrak{M}
= R\langle G\rangle\mathfrak{m}
= \mathfrak{m}R\langle G\rangle
= \{\sum_{g\in G}x_g g : x_g\in\mathfrak{m}\}
\subseteq R\langle G\rangle
\]
is a two-sided ideal with quotient ring
$R\langle G\rangle/\mathfrak{M}\iso\kappa\langle G\rangle$.
There is a maximal ideal
\[
\mathfrak{n}=\{\sum_{g\in G}y_g(g-1) : y_g\in\kappa\}
\lhd\kappa\langle G\rangle
\]
with quotient ring
$\kappa\langle G\rangle/\mathfrak{n}\iso\kappa$
defining the trivial $\kappa\langle G\rangle$-module
as well as the trivial $R\langle G\rangle$-module
$R\langle G\rangle/\mathfrak{N}$ where
\[
\mathfrak{N} =
\mathfrak{M} + \{\sum_{g\in G}z_g(g-1) : z_g\in R\}
\lhd R\langle G\rangle.
\]

Our next two results follow from our
Proposition~\ref{prop:RingThyResult} as
well as being special cases
of~\cite{DSP:InfCrossProd}*{theorem~4.2}.
\begin{lem}\label{lem:R<G>-semilocal}{\ph{a}} \\
\emph{(a)}
$\kappa\langle G\rangle$ is semilocal; \\
\emph{(b)}
The ideal $\mathfrak{M}\lhd R\langle G\rangle$
is a radical ideal and $R\langle G\rangle$ is
semilocal. \\
\emph{(c)} The simple $R\langle G\rangle$-modules
are obtained by pulling back the simple modules
of $\kappa\langle G\rangle$ along the quotient
homomorphism $R\langle G\rangle\to\kappa\langle G\rangle$.
\end{lem}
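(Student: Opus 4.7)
The strategy is to derive all three parts as essentially direct consequences of Proposition~\ref{prop:RingThyResult}, applied with $A=R$, $B=R\langle G\rangle$, and $\mathfrak{a}=\mathfrak{m}=\rad R$.

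First I would verify the hypotheses of that proposition. The local ring $R$ is semilocal with $\rad R=\mathfrak{m}$, so $\mathfrak{m}$ is a radical ideal. The skew group ring $R\langle G\rangle$ is free as a left $R$-module on the underlying set of~$G$ and hence finitely generated over~$R$. The key remaining point is the hypothesis $B\mathfrak{a}\subseteq\mathfrak{a}B$: since $R$ is local, its unique maximal ideal~$\mathfrak{m}$ must be preserved by every ring automorphism of~$R$, so for each $g\in G$ and $x\in\mathfrak{m}$ we may rewrite $g\cdot x=g(x)\cdot g\in\mathfrak{m}R\langle G\rangle$. The same manipulation in the opposite direction shows $R\langle G\rangle\mathfrak{m}=\mathfrak{m}R\langle G\rangle=\mathfrak{M}$, as already asserted in the setup, and in particular $\mathfrak{M}$ is a two-sided ideal.

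Part~(a) is most cleanly obtained by a direct argument: $\kappa\langle G\rangle$ is free of finite rank $|G|$ as a left $\kappa$-module, and every left ideal is in particular a left $\kappa$-subspace, so descending chains of left ideals stabilise; thus $\kappa\langle G\rangle$ is left Artinian and \emph{a fortiori} semilocal. For part~(b), the Proposition now applies: its conclusion~(a) gives $\mathfrak{m}\subseteq\rad R\langle G\rangle$, so $\mathfrak{M}=R\langle G\rangle\mathfrak{m}\subseteq\rad R\langle G\rangle$ and $\mathfrak{M}$ is a radical ideal, while its conclusion~(b) is exactly the statement that $R\langle G\rangle$ is semilocal.

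For part~(c), any simple left $R\langle G\rangle$-module is annihilated by $\rad R\langle G\rangle$, hence by~$\mathfrak{M}$, and therefore factors uniquely through the quotient $R\langle G\rangle\twoheadrightarrow\kappa\langle G\rangle$ as a simple $\kappa\langle G\rangle$-module; conversely every simple $\kappa\langle G\rangle$-module pulls back to a simple $R\langle G\rangle$-module along this surjection, and these two constructions are mutually inverse. I do not foresee any real obstacle here; the only subtle point to keep in view is that locality of~$R$ is the precise ingredient forcing $G$-stability of~$\mathfrak{m}$, without which $\mathfrak{M}$ would not be two-sided and the Proposition could not be invoked.
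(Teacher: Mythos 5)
Your proof is correct and follows essentially the same route as the paper: part (a) by observing that $\kappa\langle G\rangle$ is finite-dimensional over $\kappa$ and hence Artinian, part (b) by invoking Proposition~\ref{prop:RingThyResult} with $A=R$, $B=R\langle G\rangle$, $\mathfrak{a}=\mathfrak{m}$, and part (c) as a direct consequence of $\mathfrak{M}\subseteq\rad R\langle G\rangle$. You add a useful explicit check of the hypothesis $B\mathfrak{a}\subseteq\mathfrak{a}B$ (via $G$-stability of $\mathfrak{m}$, which the paper leaves implicit), but the underlying argument is the same.
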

\begin{proof}
(a) This follows from Artin-Wedderburn theory
since $\kappa\langle G\rangle$ is a finite
dimensional $\kappa$-vector space and hence
Artinian. \\
(b) Use Proposition~\ref{prop:RingThyResult}. \\
(c) This follows from (b).
\end{proof}

A detailed discussion of lifting of idempotents 
and results on Krull-Schmidt decompositions 
for complete local Noetherian rings can be 
found in Lam~\cite{TYL:NonCommRings}*{section~21}.

Now we can deduce an important special case.
\begin{lem}\label{lem:R<G>-local}
Suppose that $G$ is a $p$-group. Then \\
\emph{(a)}
$\kappa\langle G\rangle$ is local with
unique maximal left/right ideal\/
$\mathfrak{n}$ equal to the radical\/
$\rad\kappa\langle G\rangle$; \\
\emph{(b)}
$R\langle G\rangle$ is local with unique
maximal left/right ideal\/ $\mathfrak{N}$.

Hence $R\langle G\rangle$ and $\kappa\langle G\rangle$
each have the unique simple module~$\kappa$.
\end{lem}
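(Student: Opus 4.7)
The plan is to prove (a) first and then obtain (b) as a formal consequence. For (a), Lemma~\ref{lem:R<G>-semilocal}(a) already provides semilocality of $\kappa\langle G\rangle$, and by inspection $\kappa\langle G\rangle/\mathfrak{n}\iso\kappa$ is a field, so $\mathfrak{n}$ is a maximal two-sided ideal. It therefore suffices to show $\mathfrak{n}$ is nilpotent, because nilpotency places $\mathfrak{n}$ inside the Jacobson radical, and maximality of $\mathfrak{n}$ then forces $\mathfrak{n}=\rad\kappa\langle G\rangle$ and hence locality.

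To prove nilpotency of $\mathfrak{n}$, I would induct on $|G|$. The base case $G=\{1\}$ gives $\mathfrak{n}=0$. For the inductive step, since $G$ is a nontrivial $p$-group its centre is nontrivial and so contains an element $z$ of order $p$. In the paper's intended applications $G$ acts trivially on $\kappa$, making $z$ central in $\kappa\langle G\rangle$, and then the Frobenius identity in characteristic $p$ yields $(z-1)^p=z^p-1=0$. Consequently the principal two-sided ideal $I=(z-1)\kappa\langle G\rangle\subseteq\mathfrak{n}$ is nilpotent, while the quotient $\kappa\langle G\rangle/I\iso\kappa\langle G/\langle z\rangle\rangle$ is a skew group ring over a strictly smaller $p$-group, to which the inductive hypothesis applies. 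Combining the two nilpotency statements yields $\mathfrak{n}^N=0$ for a suitable $N$.

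For (b), I would invoke Lemma~\ref{lem:R<G>-semilocal}(c), which says every simple $R\langle G\rangle$-module is pulled back from a simple $\kappa\langle G\rangle$-module. By (a) there is a unique such simple module, namely $\kappa$, whose annihilator in $R\langle G\rangle$ is precisely the two-sided ideal $\mathfrak{N}$, since $R\langle G\rangle/\mathfrak{N}\iso\kappa$. Thus $\mathfrak{N}$ is the unique maximal two-sided ideal, and combined with Lemma~\ref{lem:R<G>-semilocal}(b) this forces $\mathfrak{N}=\rad R\langle G\rangle$ together with locality of $R\langle G\rangle$.

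The main obstacle is the nilpotency argument in (a); everything else is formal packaging built on top of the semilocal lemma. A subtlety, should one wish to allow $G$ to act nontrivially on $\kappa$, is that $z$ need no longer be central in $\kappa\langle G\rangle$, so the Frobenius computation would have to be replaced by a finer argument exploiting the finite-dimensional semilocal structure of $\kappa\langle G\rangle$ over $\kappa^G$; in the paper's setting where $Q_8$ acts trivially on $\F_4$ this subtlety does not arise.
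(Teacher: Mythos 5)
Your route through part (a) is genuinely different from the paper's, and it is worth comparing. You establish locality by proving $\mathfrak{n}$ is nilpotent via induction on $|G|$: pick a central element $z\in Z(G)$ of order $p$, observe $(z-1)^p=0$ in characteristic $p$, and peel off $\langle z\rangle$. The paper instead shows directly that every simple $\kappa\langle G\rangle$-module is isomorphic to $\kappa$: given $0\neq s$ in a simple module $S$, the finite $\F_p[G]$-module $\F_p[G]s$ has $p$-power cardinality and $0$ as a fixed point, so the orbit-counting congruence forces another fixed vector $v\neq0$, and then $\kappa v$ is a $\kappa\langle G\rangle$-submodule, whence $S=\kappa v\iso\kappa$. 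Part (b) is handled the same way in both arguments, using Lemma~\ref{lem:R<G>-semilocal}(b),(c).

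The gap you flag is real and does defeat your proof of the lemma as stated. The hypothesis in the surrounding setup allows $G$ to act on $\kappa$ by nontrivial field automorphisms, and your reduction hinges on $z-1$ being central in $\kappa\langle G\rangle$, which fails exactly when $z$ acts nontrivially on $\kappa$; moreover the quotient $\kappa\langle G\rangle/(z-1)$ is then no longer $\kappa\langle G/\langle z\rangle\rangle$, since $(z-1)$ would absorb $\leftidx{^z}{a}{}-a$ for $a\in\kappa$. So your induction does not close in the skew setting, and the appeal to a ``finer argument exploiting the semilocal structure over $\kappa^G$'' is a placeholder rather than a proof. The paper's orbit-counting argument sidesteps the issue entirely: it never uses centrality of anything, only that orbits of a $p$-group on a finite $\F_p$-vector space have $p$-power size, and that a $G$-fixed line is automatically a $\kappa\langle G\rangle$-submodule. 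If you want a nilpotency-flavoured proof that works in general, the clean fix is to run the paper's fixed-vector argument to identify the unique simple module and then read off $\rad\kappa\langle G\rangle=\mathfrak{n}$ from the semilocal structure, rather than trying to exhibit nilpotency by hand.
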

\begin{proof}
(a) Suppose that $S$ is a (non-trivial)
simple left $\kappa\langle G\rangle$-module.
For $0\neq s\in S$, consider the finite
dimensional $\F_p$-subspace $\F_p[G]s\subseteq S$
whose cardinality is a power of~$p$. It is
also a non-trivial finite $\F_p[G]$-module,
so the $p$-group~$G$ acts linearly with~$0$
as a fixed point. Since every orbit has
cardinality equal to a power of~$p$ there
must be at least one other fixed point
$v\neq0$ and this spans a $\kappa\langle G\rangle$-submodule
$\kappa v\subseteq S$. It follows that
$S=\kappa v\iso\kappa$. Of course if
the $G$-action on $\kappa$ is trivial,
$\kappa\langle G\rangle=\kappa[G]$ and
this argument is well-known. \\
(b) This is immediate from (a) together
with parts (b) and (c) of
Lemma~\ref{lem:R<G>-semilocal}.
\end{proof}
\begin{cor}\label{cor:R<G>-local}
If $G$ is a $p$-group, then
$\mathfrak{N}\lhd R\langle G\rangle$
is the unique maximal ideal and $R\langle G\rangle$
is $\mathfrak{N}$-adically complete and Hausdorff.
\end{cor}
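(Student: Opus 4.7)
The plan is to deduce both claims directly from the results already in place: the first assertion is a restatement of Lemma~\ref{lem:R<G>-local}(b), so the substantive work lies in establishing the completeness and Hausdorff properties.

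First I would compare the $\mathfrak{N}$-adic and $\mathfrak{M}$-adic topologies on $R\langle G\rangle$. The inclusion $\mathfrak{M}\subseteq\mathfrak{N}$ is immediate from the definitions and gives $\mathfrak{M}^n\subseteq\mathfrak{N}^n$ for every~$n$. For a containment in the other direction I would apply Proposition~\ref{prop:RingThyResult}(c) with $A=R$, $B=R\langle G\rangle$, $\mathfrak{a}=\mathfrak{m}$, and $\mathfrak{b}=\mathfrak{N}=\rad R\langle G\rangle$ (the latter identification coming from Lemma~\ref{lem:R<G>-local}(b)). This yields an integer $k\geq1$ with $\mathfrak{N}^k\subseteq R\langle G\rangle\mathfrak{m}=\mathfrak{M}$, hence $\mathfrak{N}^{kn}\subseteq\mathfrak{M}^n$ for every~$n$. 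Thus the two filtrations are cofinal and define the same topology.

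Next I would establish $\mathfrak{M}$-adic completeness and Hausdorffness by reducing to the assumed properties of~$R$. Since $G$ is finite and is a left $R$-basis of $R\langle G\rangle$, and since $G$ acts on~$R$ by local automorphisms so that $\mathfrak{m}$ is $G$-stable, one has $\mathfrak{M}^n=\mathfrak{m}^n R\langle G\rangle=\bigoplus_{g\in G}\mathfrak{m}^n g$ as left $R$-modules. Consequently $R\langle G\rangle/\mathfrak{M}^n\iso(R/\mathfrak{m}^n)\langle G\rangle$, and passing to the inverse limit over~$n$ yields $\varprojlim_n R\langle G\rangle/\mathfrak{M}^n\iso R\langle G\rangle$ because $R$ is $\mathfrak{m}$-adically complete. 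The Hausdorff property is automatic from $\bigcap_n\mathfrak{M}^n=\bigl(\bigcap_n\mathfrak{m}^n\bigr)R\langle G\rangle=0$. Combining this with the topological comparison of the previous step gives the claim for the $\mathfrak{N}$-adic topology.

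The argument is essentially an assembly of prior results, so I do not anticipate any serious obstacle. The only mildly delicate point is the topological comparison: the $\mathfrak{M}$-adic topology arises naturally from viewing $R\langle G\rangle$ as a finite free left $R$-module, while the $\mathfrak{N}$-adic topology is intrinsic to $R\langle G\rangle$ as a local ring, and it is precisely Proposition~\ref{prop:RingThyResult}(c) that bridges the two.
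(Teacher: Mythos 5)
Your argument is correct and follows essentially the same route as the paper: both hinge on Proposition~\ref{prop:RingThyResult}(c) to get $\mathfrak{N}^k\subseteq\mathfrak{M}$, combine it with the evident $\mathfrak{M}^n\subseteq\mathfrak{N}^n$ to conclude the $\mathfrak{N}$-adic, $\mathfrak{M}$-adic, and $\mathfrak{m}$-adic filtrations are cofinal, and then invoke completeness and Hausdorffness of $R$. You merely spell out the last step (the identification $R\langle G\rangle/\mathfrak{M}^n\iso(R/\mathfrak{m}^n)\langle G\rangle$ and the passage to the inverse limit) which the paper leaves implicit after noting the topologies agree.
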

\begin{proof}
This follows from Proposition~\ref{prop:RingThyResult}(c):
some power of $\mathfrak{N}$ is contained
in $\mathfrak{M}=R\langle G\rangle\mathfrak{m}$,
and for $k\geq1$,
$\mathfrak{M}^k = R\langle G\rangle\mathfrak{m}^k\subseteq\mathfrak{N}^k$.
Therefore the $\mathfrak{N}$-adic, $\mathfrak{M}$-adic
and $\mathfrak{m}$-adic topologies agree.
\end{proof}

We recall that for a local ring, every
projective module is free by a theorem 
of Kaplansky~\cite{IK:ProjMods}*{theorem~2}, 
so in statements involving local rings, 
projective modules can be taken to be 
free.
\begin{lem}\label{lem:ProjMods}{\ } \\
\emph{(a)} Let $P$ be a projective 
$R\langle G\rangle$-module. Then~$P$ is 
a projective $R$-module. \\
\emph{(b)}
Let $Q$ be a finitely generated projective
$\kappa\langle G\rangle$-module. Then there
is a projective $R\langle G\rangle$-module
$\tilde{Q}$ for which
$\kappa\langle G\rangle\otimes_{R\langle G\rangle}\tilde{Q}\iso Q$.
\end{lem}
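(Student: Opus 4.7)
For part (a) the plan is to use the fact that $R\langle G\rangle$ is free as a left $R$-module on the set of group elements of~$G$, so in particular any free $R\langle G\rangle$-module is free as a left $R$-module. If $P$ is a projective left $R\langle G\rangle$-module, then $P$ is a direct summand of some free $R\langle G\rangle$-module $F$, and the direct sum decomposition $F=P\oplus P'$ is simultaneously an $R$-module decomposition. Since $F$ is a free (hence projective) $R$-module, $P$ is a direct summand of a free $R$-module and is therefore projective over~$R$.

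For part (b), the plan is to reduce the question to lifting idempotents in a matrix ring. Since $Q$ is a finitely generated projective $\kappa\langle G\rangle$-module, write $Q\iso\bar{e}\,\kappa\langle G\rangle^n$ for some idempotent $\bar{e}\in M_n(\kappa\langle G\rangle)$, and try to produce an idempotent $e\in M_n(R\langle G\rangle)$ lifting~$\bar{e}$ along the canonical surjection $R\langle G\rangle\twoheadrightarrow\kappa\langle G\rangle$ whose kernel is~$\mathfrak{M}$. One then sets $\tilde Q=e\,R\langle G\rangle^n$, which is a projective $R\langle G\rangle$-module by construction, and the reduction $\kappa\langle G\rangle\otimes_{R\langle G\rangle}\tilde Q$ equals $\bar e\,\kappa\langle G\rangle^n\iso Q$.

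The key input is therefore that idempotents lift from $\kappa\langle G\rangle$ to $R\langle G\rangle$. Two facts make this work. First, $\mathfrak{M}\subseteq\rad R\langle G\rangle$ by Lemma~\ref{lem:R<G>-semilocal}(b). Second, $R\langle G\rangle$ is a finitely generated $R$-module (free of rank $|G|$ on the left), so it is $\mathfrak{m}$-adically complete and Hausdorff, and the $\mathfrak{m}$-adic topology on $R\langle G\rangle$ coincides with the $\mathfrak{M}$-adic topology because $\mathfrak{M}^k=R\langle G\rangle\mathfrak{m}^k$. This carries over to the matrix ring $M_n(R\langle G\rangle)$: the ideal $M_n(\mathfrak{M})$ lies in the Jacobson radical and $M_n(R\langle G\rangle)$ is complete and Hausdorff in the $M_n(\mathfrak{M})$-adic topology. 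Under these hypotheses, the standard Hensel-type argument (e.g., the version given in Lam~\cite{TYL:NonCommRings}*{section~21}) produces an idempotent lift $e$ of $\bar e$, completing the proof.

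The main obstacle is recording the topological setup cleanly: one has to justify that $M_n(R\langle G\rangle)$ is complete and Hausdorff in the relevant adic topology and that the resulting ideal is contained in the Jacobson radical, so that the classical lifting-of-idempotents theorem for complete semilocal rings applies verbatim. Once those verifications are in place (both of which follow from Proposition~\ref{prop:RingThyResult}, Lemma~\ref{lem:R<G>-semilocal} and the completeness of $R$), the construction of $\tilde Q$ is automatic.
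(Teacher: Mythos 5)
Your part (a) is essentially identical to the paper's argument (free $R\langle G\rangle$-modules are free over $R$, hence a summand of a free $R\langle G\rangle$-module is $R$-projective).

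For part (b) you take a genuinely different route. The paper first invokes the Krull--Schmidt theorem to reduce to the case where $Q$ is a projective \emph{indecomposable}, hence cyclic, $\kappa\langle G\rangle$-module, and then (tersely) appeals to the existence of a cyclic projective $R\langle G\rangle$-module $\tilde{Q}$ covering $Q$ -- in effect lifting a single \emph{primitive} idempotent of $\kappa\langle G\rangle$ along $R\langle G\rangle\twoheadrightarrow\kappa\langle G\rangle$. You instead write $Q$ as the image of an idempotent $\bar{e}\in M_n(\kappa\langle G\rangle)$ acting on $\kappa\langle G\rangle^n$ and lift $\bar{e}$ in one step to an idempotent of $M_n(R\langle G\rangle)$, then set $\tilde{Q}=e\,R\langle G\rangle^n$. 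The two approaches rest on the same engine -- completeness of $R\langle G\rangle$ in the $\mathfrak{M}$-adic topology (Corollary~\ref{cor:R<G>-local} / Proposition~\ref{prop:RingThyResult}) together with $\mathfrak{M}\subseteq\rad R\langle G\rangle$ (Lemma~\ref{lem:R<G>-semilocal}(b)), which is exactly what makes Lam's lifting-of-idempotents machinery (section~21 of \cite{TYL:NonCommRings}) apply. Your version avoids the Krull--Schmidt reduction at the cost of passing to a matrix ring and verifying that $M_n(R\langle G\rangle)$ inherits the adic completeness and the radical containment $M_n(\mathfrak{M})\subseteq\rad M_n(R\langle G\rangle)$; both facts are routine. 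One mild payoff of your route is that it makes explicit the final step the paper leaves implicit, namely that $\kappa\langle G\rangle\otimes_{R\langle G\rangle}\tilde{Q}\iso Q$: with $\tilde{Q}=e\,R\langle G\rangle^n$ this is immediate from right-exactness of the base change (indeed exactness, since $\tilde{Q}$ and its complement are both flat).

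One small caution on notation: since the paper's modules are consistently \emph{left} modules, the idempotent $\bar{e}$ should be viewed in $\End_{\kappa\langle G\rangle}(\kappa\langle G\rangle^n)$ acting on the right of row vectors, which is $M_n(\kappa\langle G\rangle)^{\mathrm{op}}$; writing $\bar{e}\,\kappa\langle G\rangle^n$ with $\bar{e}$ on the left is a slight abuse. This does not affect the substance of the argument, since lifting of idempotents in a ring is equivalent to lifting in its opposite.
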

\begin{proof}
(a) Every projective module is a retract of
a free module and $R\langle G\rangle$-module
is a free $R$-module. \\
(b) By the Krull-Schmidt theorem, we may
express $Q$ as a coproduct of projective
indecomposable $\kappa\langle G\rangle$-modules,
so it suffices to assume $Q$ is a projective
indecomposable, hence cyclic. Viewing $Q$
as an $R\langle G\rangle$-module we can
choose a cyclic projective module $\tilde{Q}$
with an epimorphism $\pi\:\tilde{Q}\to Q$.
\end{proof}

We will make use of the following result.
\begin{lem}\label{lemLiftingEndoTriv}
Suppose that $M$ is an $R\langle G\rangle$-module
which is finitely generated free as an $R$-module.
If $\kappa\otimes_R M$ is an endotrivial
$\kappa\langle G\rangle$-module, then~$M$
is an endotrivial $R\langle G\rangle$-module.
\end{lem}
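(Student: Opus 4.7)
The plan is to lift the decomposition $\bar M\otimes_\kappa \bar M^*\iso\kappa\oplus\bar P$ over $\kappa\langle G\rangle$ to a decomposition $M\otimes_R M^*\iso R\oplus P$ over $R\langle G\rangle$ with $P$ projective, from which endotriviality of $M$ follows at once. Set $N=M\otimes_R M^*$, which is finitely generated free over $R$ with reduction $\bar N:=\kappa\otimes_R N\iso\bar M\otimes_\kappa\bar M^*$. The endotriviality hypothesis gives $\bar N\iso\kappa\oplus\bar P$ with $\bar P$ projective over $\kappa\langle G\rangle$; by Lemma~\ref{lem:ProjMods}(b) lift $\bar P$ to a projective $R\langle G\rangle$-module $P$, which by Lemma~\ref{lem:ProjMods}(a) is free over~$R$, so $R\oplus P$ has the same $R$-rank as~$N$.

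Next I would build a comparison map of $R\langle G\rangle$-modules $\phi=(\phi_1,\phi_2)\:R\oplus P\to N$. The component $\phi_1\:R\to N=\End_R(M)$ sends $1\mapsto\id_M$ and is $R\langle G\rangle$-linear because $\id_M$ is $G$-equivariant. The component $\phi_2\:P\to N$ is any $R\langle G\rangle$-lift of the summand inclusion $\bar P\hookrightarrow\bar N$; such a lift exists because $P$ is projective and the reduction $N\to\bar N$ is surjective. Reducing modulo~$\mathfrak{m}$, the map $\bar\phi\:\kappa\oplus\bar P\to\kappa\oplus\bar P$ is lower-triangular with identity lower-right block and upper-left entry $a\in\kappa$ equal to the component of $\id_{\bar M}$ in the $\kappa$-summand of~$\bar N$.

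The heart of the argument is to show that $a$ is a unit. If $a=0$, then $\id_{\bar M}$ lies in the projective summand $\bar P$, so the unit map $\kappa\to\bar N$, $1\mapsto\id_{\bar M}$, factors through a projective and is null in the stable module category of $\kappa\langle G\rangle$. However, under the stable isomorphism $\bar M\otimes_\kappa\bar M^*\stsim\kappa$ encoding endotriviality of~$\bar M$, this unit map represents the identity of~$\kappa$, which is non-null whenever $\kappa$ is not itself projective over $\kappa\langle G\rangle$---equivalently whenever the $p$-Sylow subgroup of~$G$ is nontrivial (the opposite case being immediate). Thus $a\neq 0$ and $\bar\phi$ is invertible.

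Finally, $\phi$ is an $R$-linear map between finitely generated free $R$-modules of equal rank whose reduction is invertible, so $\det\phi$ is a unit in the local ring~$R$ and $\phi$ is itself an $R$-linear isomorphism; since $\phi$ is $R\langle G\rangle$-linear by construction, it is an isomorphism of $R\langle G\rangle$-modules. This yields $N\iso R\oplus P$ with $P$ projective, so $M$ is endotrivial. The step I expect to require the most care is the nonvanishing of~$a$, which converts a stable-category fact about the tensor-duality unit into the concrete linear-algebra input driving the comparison.
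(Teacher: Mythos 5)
Your proof is correct and follows essentially the same route as the paper's: both lift the projective summand of $\kappa\otimes_R\End_R(M)\iso\kappa\oplus P$ to a projective $R\langle G\rangle$-module via Lemma~\ref{lem:ProjMods}(b), build a comparison map $R\oplus\tilde{P}\to\End_R(M)$ whose reduction is the decomposition isomorphism, and conclude by Nakayama plus a rank count over the local ring~$R$. The one place you add content is the paragraph showing the $\kappa$-component of $\id_{\bar M}$ is nonzero (equivalently, that the unit $\kappa\to\kappa\otimes_R\End_R(M)$ is a split monomorphism) --- the paper asserts this with ``the latter is split'' but does not give the duality-unit argument you spell out, so that step is a worthwhile clarification rather than a divergence.
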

\begin{proof}
Let $\End_R(M)=\Hom_R(M,M)$ with its usual
left $R\langle G\rangle$-module structure.
If $\kappa\otimes_R M$ is endotrivial then
as $\kappa\langle G\rangle$-modules,
\[
\kappa\otimes_R\End_R(M)
\iso
\End_\kappa(\kappa\otimes_R M,\kappa\otimes_R M)
\iso\kappa\oplus P
\]
where $P$ is a projective $\kappa\langle G\rangle$-module.
Recall that the units give monomorphisms
$R\to\End_R(M)$ and $\kappa\to\kappa\otimes_R\End_R(M)$,
where the latter is split.

Now choose a projective $R\langle G\rangle$-module
$\tilde{P}$ with an epimorphism $\pi\:\tilde{P}\to P$
and $\kappa\otimes_{R}\tilde{P}\iso P$. There is
a commutative diagram of solid arrows with exact
rows
%
\[
\begin{tikzcd}
0\ar[r] & R\ar[r]\ar[d,twoheadrightarrow] & \End_R(M)\ar[r]\ar[d,twoheadrightarrow] 
& \tilde{P}\ar[d, twoheadrightarrow, "\pi"]\ar[dl, dashed, "{\pi'}"']\ar[l, bend right=25,"{\pi''}"', dotted] & \\
0\ar[r] & \kappa\ar[r] & \kappa\otimes_R\End_R(M)\ar[r] 
& \ar[r]P\ar[l, dashed,bend left=25, "\sigma"] & 0 \\
\end{tikzcd}
\]
and the composition $\sigma\circ\pi$ lifts
to $\pi''\:\tilde{P}\to\End_R(M)$. On applying
$\kappa\otimes_R(-)$ to the composition
\[
\tilde{P}\xrightarrow{\pi''}\End_R(M)\xrightarrow{\pi}P
\]
we obtain the composition
\[
P\xrightarrow{\sigma}\kappa\otimes_R\End_R(M)\to P
\]
which is an epimorphism. Using Nakayama's Lemma
we now see that $\End_R(M)\to\tilde{P}$ is an
epimorphism, hence $\End_R(M)\iso R\oplus\tilde{P}$
and so $M$ is endotrivial.
\end{proof}

Although we don't really make use of this
fact, we remark that an appropriate dual
of a skew group ring over a commutative 
ring admits the structure of a Hopf algebroid. 
A generalisation to skew Hecke algebras 
appears in the appendix of~\cite{AB:LandweberFitnThm}
where we referred to them as twisted 
Hecke algebras.

%

\subsection*{Skew Hecke algebras}

Hecke algebras are commonly encountered 
in the study of modular forms and 
representation theory and they also 
appear as stable operations in elliptic 
cohomology and topological modular forms.
A general algebraic introduction can be 
found in Krieg~\cite{AK:HeckeAlgebras}. 
Here we describe a skew version, for 
a recent algebraic account see Waldron 
\& Loveridge~\cite{JW&LDL:SkewHeckeAlg}.

To simplify things we will assume that 
$G$ is a \emph{finite} group acting on 
a commutative $\k$-algebra~$A$ by algebra
automorphisms. If $H\leq G$ we may form 
the skew group algebra $A\langle G\rangle$.
We will indicate the action of $g\in G$ 
on $a\in A$ by writing $\leftidx{^g}{a}{}$.

The free left $A$-module $A\,G/H$ is also
a left $A\langle G\rangle$-module and we 
may define a \emph{skew Hecke algebra} 
by 
\[
A^H\{H\backslash G/H\} = \End_{A\langle G\rangle}(A\{G/H\})^\op
= \Hom_{A\langle G\rangle}(A\{G/H\},A\{G/H\})^\op,
\]
the opposite of the endomorphism algebra of 
the $A\langle G\rangle$-module~$A\{G/H\}$. 

By standard adjunction results, there are 
isomorphisms of $\k$-modules
\begin{align*}
A^H\{H\backslash G/H\}
&\iso \Hom_{A\langle G\rangle}(A\{G/H\},A\{G/H\})^\op  \\
&\iso \Hom_{\k[G]}(\k\{G/H\},A\{G/H\}) \\
&\iso \Hom_{\k[H]}(\k,A\{G/H\}),
\end{align*}
which actually isomorphisms of 
$\leftidx{^H}{A}{}$-modules. The last term 
can be identified with the $H$-fixed point 
set
\[
\leftidx{^H}{(A\{G/H\})}{} =
\biggl\{\sum_{x\: G/H}r_x\, xH : 
\forall x,\forall h\in H,\; \leftidx{^h}{r_x}{} = r_{hx}\biggr\}.
\]
Here we adopt notation from~\cite{AK:HeckeAlgebras}: 
$\ds\sum_{x\: G/H}$ indicates summation over 
a complete set of coset representatives~$x$ 
for~$G/H$. If the $G$-action on~$A$ is trivial,
$\leftidx{^H}{(A\{G/H\})}{}$ is the free $A$-module 
on the set of double cosets $H\backslash G/H$ which 
agrees with the classical notion of Hecke algebra.
Of course we can view $A^H\{H\backslash G/H\}$ as
an $\leftidx{^H}{A}{}$-algebra where the unity 
comes from the double coset~$H1H$ and is the 
element~$1H\in A\{G/H\}$.

To make the multiplication $*$ on $A^H\{H\backslash G/H\}$ 
explicit, we identify~$\alpha\in\End_{A\langle G\rangle}(A\{G/H\})^\op$
with the corresponding element of $\leftidx{^H}{(A\{G/H\})}{}$,
\[
\alpha(1H) = \sum_{x\:G/H}a_x\,xH
\]
where $a_x\in A$. Then for $\beta\in\End_{A\langle G\rangle}(A\{G/H\})^\op$
with 
\[
\beta(1H) = \sum_{x\:G/H}b_x\,xH
\]
we obtain 
\begin{equation}\label{eq:HeckeProduct}
\alpha * \beta = \sum_{x,y\:G/H}a_x \leftidx{^x}{b}{_y}\,(xy)H
= \sum_{x,y\:G/H}a_x \leftidx{^x}{b}{_{x^{-1}y}}\,yH.
\end{equation}

Now for a left $A\langle G\rangle$-module~$M$, 
its $H$-fixed point set 
\[
\leftidx{^H}{M}{} 
\iso \Hom_{\k[H]}(\k,M)
\iso \Hom_{A\langle G\rangle}(A\{G/H\},M)
\]
is naturally a \emph{right} 
$\End_{A\langle G\rangle}(A\{G/H\})$-module
and therefore a \emph{left} $A^H\{H\backslash G/H\}$-module. 
For $m\in\leftidx{^H}{M}{}$ and $\alpha\in A^H\{H\backslash G/H\}$ 
the action is given by 
\begin{equation}\label{eq:FixedPtAction}
\alpha*m = \sum_{x\:G/H}a_x\,\leftidx{^x}{m}{}.
\end{equation}
When $H\lhd G$, as sets $H\backslash G/H=G/H$ 
and
\[
A^H\{H\backslash G/H\} 
\iso (\leftidx{^H}{A}{})\langle G/H\rangle.
\]

A more interesting situation that we encounter 
in Section~\ref{sec:G24action} involves a 
semidirect product~$G=HN\iso H\ltimes N$. 
Each double coset in $H\backslash G/H$ has 
the form~$HnH$ where $n\in N$ is uniquely 
determined up to $H$-conjugacy. So as left 
$\leftidx{^H}{A}{}$-modules,
\[
A^H\{H\backslash G/H\} \iso A^H\{N\}.
\] 
For a left $A\langle G\rangle$-module~$M$, 
the action of the element corresponding 
to $n\in N$ on $\leftidx{^H}{M}{}$ is 
given by
\begin{equation}\label{eq:DbleCosetAction}
nH*m = \sum_{h\: H/C_H(n)} hnh^{-1}m,
\end{equation}
where the sum is really taken over the 
set of $H$-conjugates of~$n$.

\begin{bibdiv}
\begin{biblist}

\bib{JFA&SBP}{article}{
   author={Adams, J. F.},
   author={Priddy, S. B.},
   title={Uniqueness of\/ $B\SO$},
   journal={Math. Proc. Cambridge
   Phil. Soc.},
   volume={80},
   date={1976},
   pages={475\ndash509},
}

\bib{AA-RJM:CohomFinGps}{book}{
   author={Adem, A.},
   author={Milgram, R. J.},
   title={Cohomology of Finite Groups},
   series={Grundlehren der mathematischen Wissenschaften},
   volume={309},
   edition={2},
   publisher={Springer-Verlag},
   date={2004},
}

\bib{AB:LandweberFitnThm}{article}{
   author={Baker, A.},
   title={A version of Landweber's filtration
   theorem for $v_n$-periodic
   Hopf algebroids},
   journal={Osaka J. Math.},
   volume={32},
   date={1995},
   pages={689\ndash699},
}

\bib{AB:Jokers}{article}{
   author={Baker, A.},
   title={Iterated doubles of the Joker
   and their realisability},
   journal={Homology Homotopy Appl.},
   volume={20},
   date={2018},
   number={2},
   pages={341\ndash360},
}
		
\bib{AB&TB:Jokers}{article}{
   author={Baker, A.},
   author={Bauer, T.},
   title={The realizability of some
   finite-length modules over the
   Steenrod algebra by spaces},
   journal={Algebr. Geom. Topol.},
   volume={20},
   date={2020},
   number={4},
   pages={2129\ndash2143},
}

%
%

\bib{BEM:v2-periodicA1}{article}{
   author={Bhattacharya, P.},
   author={Egger, P.},
   author={Mahowald, M.},
   title={On the periodic $v_2$-self-map
   of $A_1$},
   journal={Algebr. Geom. Topol.},
   volume={17},
   date={2017},
   number={2},
   pages={657\ndash692},
}

\bib{PB&NR:PicA2}{article}{
   author={Bhattacharya, P.},
   author={Ricka, N.},
   title={The stable Picard group of $\StA(2)$},
   date={2017},
   eprint={arXiv:1702.01493},
}


\bib{RRB&JR:tmfBook}{book}{
   author={Bruner, R. R.},
   author={Rognes, J.},
   title={The Adams Spectral Sequence
   for Topological Modular Forms},
   series={Math. Surv. and Mono.},
   volume={253},
   publisher={Amer. Mat. Soc.},
   date={2021},
}

%

\bib{DC-AM-NN-JN:Descent}{article}{
   author={Clausen, D.},
   author={Mathew, A.},
   author={Naumann, N.},
   author={Noel, J.},
   title={Descent in algebraic $K$-theory
   and a conjecture of Ausoni-Rognes},
   journal={J. Eur. Math. Soc. (JEMS)},
   volume={22},
   date={2020},
   pages={1149\ndash1200},
}

\bib{ECD:ExtnThmHalHigman}{article}{
   author={Dade, E. C.},
   title={Une extension de la th\'{e}orie 
   de Hall et Higman},
   language={French},
   journal={J. Algebra},
   volume={20},
   date={1972},
   pages={570\ndash609},
}

\bib{ESD-MJH:HtpyFixPtSpectraClSubgpsMoravaStabGp}{article}{
   author={Devinatz, E. S.},
   author={Hopkins, M. J.},
   title={Homotopy fixed point spectra
   for closed subgroups of the Morava
   stabilizer groups},
   journal={Topology},
   volume={43},
   date={2004},
   pages={1\ndash47},
}

\bib{TMFbook}{collection}{
   title={Topological Modular Forms},
   series={Mathematical Surveys and Monographs},
   volume={201},
   editor={Douglas, C. L.},
   editor={Francis, J.},
   editor={Henriques, A. G.},
   editor={Hill, M. A.},
   publisher={Amer. Math. Soc.},
   date={2014},
}

%
%

\bib{H-WH:CentResn}{article}{
   author={Henn, H-W.},
   title={The centralizer resolution of the
   $K(2)$-local sphere at the prime $2$},
   conference={
      title={Homotopy Theory: Tools and Applications},
   },
   book={
      series={Contemp. Math.},
      volume={729},
      publisher={Amer. Math. Soc.},
   },
   date={2019},
   pages={93\ndash128},
}

\bib{IK:ProjMods}{article}{
   author={Kaplansky, I.},
   title={Projective modules},
   journal={Ann. of Math. (2)},
   volume={68},
   date={1958},
   pages={372\ndash377},
}

\bib{AK:HeckeAlgebras}{article}{
   author={Krieg, A.},
   title={Hecke Algebras},
   journal={Mem. Amer. Math. Soc.},
   volume={87},
   date={1990},
   number={435},
}

\bib{TYL:NonCommRings}{book}{
   author={Lam, T. Y.},
   title={A First Course in Noncommutative Rings},
   series={Graduate Texts in Mathematics},
   volume={131},
   edition={2},
   publisher={Springer-Verlag},
   date={2001},
}
	

\bib{NM:EndoTrivBook}{book}{
   author={Mazza, N.},
   title={Endotrivial Modules},
   series={SpringerBriefs in Mathematics},
   publisher={Springer},
   date={2019},
}

\bib{MRW:PerPhenANSS}{article}{
   author={Miller, H. R.},
   author={Ravenel, D. C.},
   author={Wilson, W. S.},
   title={Periodic phenomena in the
   Adams-Novikov spectral sequence},
   journal={Ann. of Math. (2)},
   volume={106},
   date={1977},
   pages={469\ndash516},
}

\bib{DSP:InfCrossProd}{book}{
   author={Passman, D. S.},
   title={Infinite Crossed Products},
   series={Pure and Applied Mathematics},
   volume={135},
   publisher={Academic Press},
}


\bib{DCR:CohomMoravaStabAlgs}{article}{
   author={Ravenel, D. C.},
   title={The cohomology of the Morava
   stabilizer algebras},
   journal={Math. Z.},
   volume={152},
   date={1977},
   pages={287\ndash297},
}

\bib{DCR:NovicesGuide}{article}{
   author={Ravenel, D. C.},
   title={A novice's guide to the Adams-Novikov
   spectral sequence},
   journal={Lect. Notes in Math.},
   volume={658},
   date={1978},
   pages={404\ndash475},
}
	
\bib{JR:MAMS192}{article}{
   author={Rognes, J.},
   title={Galois Extensions of Structured
   Ring Spectra. Stably Dualizable Groups},
   journal={Mem. Amer. Math. Soc.},
   volume={192},
   date={2008},
   number={898},
}


\bib{JW&LDL:SkewHeckeAlg}{article}{
    author={Waldron, J.},
    author={Loveridge L. D.},
    title={Skew Hecke algebras},
    date={2023},
    eprint={arXiv:2311.09038},
}

\end{biblist}
\end{bibdiv}

\end{document}